\tikzset{node distance=2cm, auto}
\newtheorem{theorem}{Theorem}[section]
\newtheorem{corollary}[theorem]{Corollary}
\newtheorem{lemma}[theorem]{Lemma}
\newtheorem{remark}[theorem]{Remark}
\newtheorem{definition}[theorem]{Definition}
\newtheorem{caveat}[theorem]{Caveat}
\newtheorem{summary}[theorem]{Summary}
\newcommand{\hookuparrow}{\mathrel{\rotatebox[origin=c]{90}{$\hookrightarrow$}}}
\title{The relationship of generalized manifolds to Poincar\'{e} duality complexes and topological manifolds}
\author{
  Friedrich Hegenbarth\textsuperscript{a} and Du\v{s}an Repov\v{s}\textsuperscript{b}}
\date{\today}
\begin{document}
\maketitle
\indent\textsuperscript{a} Dipartimento di Matematica "Federigo Enriques", 
Universit\`a degli Studi di Milano, IT-20133 Milano, Italy. \texttt{friedrich.hegenbarth@unimi.it}\\
\indent\textsuperscript{b} Faculty of Education and Faculty of Mathematics and Physics, University of Ljubljana, SI-1000 Ljubljana, Slovenia. \texttt{dusan.repovs@guest.arnes.si}

\begin{center}
\textit{Dedicated to the memory of Professor Sibe Marde\v{s}i\'{c} (1927-2016)}
\end{center}

\begin{abstract}
The primary purpose of this paper concerns the relation of (compact) generalized manifolds  to finite Poincar\'{e} duality complexes (PD complexes). 
The problem is that an arbitrary generalized manifold $X$ is always an ENR space, but it is not necessarily a complex. Moreover, finite PD complexes require the Poincar\'{e} duality with coefficients in the group ring $\Lambda$  ($\Lambda$-complexes). Standard homology theory implies that  $X$  is a $\mathbb{Z}$-PD   
complex.
Therefore by Browder's theorem, $X$  has a Spivak normal fibration which in turn, determines a Thom class of the pair $(N,\partial N)$ of a mapping cylinder neighborhood of $X$ in some Euclidean space. Then    $X$  satisfies the  $\Lambda$-Poincar\'{e} duality if this class induces an  isomorphism with $\Lambda$-coefficients. Unfortunately, the proof of Browder's theorem gives only isomorphisms with $\mathbb{Z}$-coefficients.
It is also not very helpful that $X$ is homotopy equivalent  to a finite complex $K$, because $K$ is not automatically a $\Lambda$-PD
complex. 
Therefore it is convenient to introduce $\Lambda$-PD
structures. To prove their existence on $X$,  we use the construction of  $2$-patch spaces and some fundamental results of Bryant, Ferry, Mio, and Weinberger. Since the class of all $\Lambda$-PD
complexes does not contain all generalized manifolds, we appropriately enlarge this class and then describe (i.e. recognize) generalized manifolds within this enlarged class in terms of the Gromov-Hausdorff metric.
\\~\\
{\bf Keywords}: Generalized manifold,
Poincar\'{e} duality complex,
ENR,
2-patch space,
resolution obstruction, 
controlled surgery, 
controlled structure set,
$\mathbb{L}_q$-surgery, 
Wall obstruction,
cell-like map,
Gromov-Hausdorff metric
\\~\\
{\bf 2010 MSC} Primary: 57R67, 57P10, 57R65; 
Secondary: 55N20, 55M05
\end{abstract}

\section{Introduction}\label{S:Introduction}
This paper deals with compact oriented  generalized manifolds, mostly without boundary, and of dimension $\geq 5$. Topological manifolds belong to this class. Conversely, by the well-known characterization theorem of Edwards~\cite{Edw78} and the resolution theorem of Quinn~\cite{Qui83,Qui87}, topological manifolds can be recognized in the class of generalized manifolds. This is briefly described in Section~\ref{S:Manifolds}, thereby stabilizing basic notations used later on.

A generalized manifold $X^n$ has the fundamental class $[X] \in H_n (X, \mathbb{Z})$ and it satisfies the Poincar\'{e} duality (PD) with respect to $\mathbb{Z}$ coefficients. However, it is not a Poincar\'{e} duality complex in the sense of Wall~\cite{Wall70} (see also Appendix).
Therefore it is appropriate to introduce the concept of the \textit{simple} $\Lambda$-PD
\textit{structure} on $X$, where $\Lambda = \mathbb{Z}[\pi_1 (X)]$ denotes the integral group ring of $\pi_1 (X)$. It consists of a (simple) symmetric algebraic Poincar\'{e} chain complex $(D_\#, \Phi)$ together with a (simple) chain equivalence
$$\alpha: (D_\# , \Phi)~\rightarrow~(S_\# (\widetilde{X}), \Delta [X]),$$ 
where $S_\# (\widetilde{X})$  denotes the singular chain complex of the universal cover $\widetilde{X}$ of $X$. We introduce this in Section~\ref{SS:Lambda}.

To construct such a structure the following is convenient: Let $N$ be a mapping cylinder neighborhood of an embedding $X \subset \mathbb{R}^m$, and $r: \partial N \rightarrow X$ the retraction.
Then $X$ is a $\mathbb{Z}$-PD space, but by Browder's theorem~\cite[Theorem~A]{Browd72}, the map $r: \partial N \rightarrow X$ has the structure of a spherical fibration, hence there is a Thom class, represented by a cycle 
$$[\mathcal{U}] \in C^{m-n} (N, \partial N, \mathbb{Z}).$$
Here, $C^{\#} (\cdot, G)$ stands for the cellular chain complex with coefficients in $G$. Let 
$$[\Sigma] = [\mathcal{U}] \cap [N] \in C_n (N, \mathbb{Z}),$$ 
where $[N] \in C_m (N, \partial N, \mathbb{Z})$ is the fundamental cycle of the manifold $N$. It is not obvious that $C_{\#} (N, \Lambda) = C_{\#} (\widetilde{N})$, together with $[\Sigma]$, determines a symmetric Poincar\'{e} chain complex (see Remark~\ref{Rem:SS3.2} in Section~\ref{SS:Lambda}). In Section~\ref{SS:Simple} we present a different approach, based on constructions used in~\cite{BFMW96,BFMW07}.

Roughly speaking, one approximates $X$ by Poincar\'{e} duality complexes $K$, obtained by gluing manifolds $W^n$ and $V^n$ along a controlled homotopy equivalence $y: \partial W \rightarrow \partial V$ between their boundaries. Applying Chapman's extensions of the Whitehead torsion theory to ANR spaces~\cite{Chapman77}, one obtains symmetric simple $\Lambda$-PD
chain complexes ($D_\# , \Phi$) with simple chain equivalences 
$$\alpha : (D_\# , \Phi) \rightarrow  (S_\# (\widetilde{X}), \Delta [X]).$$
In Section~\ref{SS:SimpleP}
these are called \textit{simple $\Lambda$-Poincar\'{e} duality types} on $X.$

Section~\ref{S:Recognizing} is an attempt to distinguish generalized manifolds. This requires a class of spaces containing all generalized manifolds, and then describe generalized manifolds in this class.
In Section~\ref{SS:Class} we introduce an appropriate class $\cal{B}$ consisting of compact separable metric ENR's $B$ satisfying the $\mathbb{Z}$-Poincar\'{e}
duality. Moreover, these spaces $B$ come equipped by simple $\Lambda$-PD
types. Of course, this class contains the class of finite $\Lambda$-PD  
complexes in the sense of Wall.

The main theorem of~\cite[Theorem~9.1]{BFMW07} leads to the following important characterization of generalized manifolds:\\

\noindent\textbf{Characterization Theorem.} \textit{If $X\in \cal{B}$
has formal dimension~$\geq 6$
then $X$ is a generalized $n$-manifold if the following is satisfied:
\begin{enumerate}
	\item[(1)] either $X$ is the limit of an inverse sequence
	 $$Y=\varprojlim \{K_{\varepsilon_1} \leftarrow K_{\varepsilon_2} \leftarrow \ldots\leftarrow K_{\varepsilon_i} \leftarrow \ldots\}$$ of $\varepsilon_i$-controlled $\Lambda$-PD
	 complexes of dimension $n$ and controlled homotopy equivalences $K_{\varepsilon_{i+1}} \rightarrow K_{\varepsilon_{i}}$  
	\item[(2)] or $X$ is the cell-like-image $f:Y\to X$ of a generalized manifold $Y$ of type (1).
\end{enumerate}}

In Section~\ref{SS:Controlled} we recall some definitions and facts about controlled PD spaces, controlled homotopy lifting properties and approximate fibrations. In summary, this leads to the formulation that ``generalized manifolds are controlled $\Lambda$-Poincar\'{e} complexes'', which however is not appropriate.

It follows from the Daverman and Husch theorem~\cite{DavHus84} that $X$ is a generalized manifold if $r: \partial N \rightarrow X$ is an approximate fibration, where
 $\partial N$ is the boundary of a mapping cylinder neighborhood $N$ of $X \subset \mathbb{R}^m$ (see also~\cite[Example~2.3]{Qui83}). Hence a necessary and sufficient condition for $X$ to be a generalized manifold is that the spherical Spivak
 fibration $\nu_X$ of $X$ reduces to an approximate fibration in a controlled manner, i.e. there is a controlled homotopy equivalence $\partial E \nu_X \rightarrow \partial N$ over $X$. This leads to the following:\\

\noindent\textbf{Recognition Criterion:} \textit{Suppose that 
$B$ belongs to the class $\mathcal{B}$. Then $B$ is a generalized manifold if $B$ admits an $\varepsilon$-$\Lambda$-PD
structure for all $\varepsilon > 0$.}\\

In Section~\ref{SS:4.3} we characterize generalized manifolds as isolated limits in the metric Gromov-Hausdorff space. If $B$ belongs to $\mathcal{B}$ then its isometry class is an element of
 this space. Given two elements $B, B'$ of $\mathcal{B}$, 
 the Gromov-Hausdorff distance $d_G(B, B') \in \mathbb{R}_+$ is well-defined.

The approximation of generalized manifolds by 2-patch spaces (see Lemma~\ref{L:3.8} below) leads to the following criterion:\\
 
\noindent\textbf{Gromov-Hausdorff Limit Criterion:} \textit{Any neighborhood of a generalized manifold $B$ contains non-generalized manifolds. Moreover, any generalized manifold $B$ is the limit of 2-patch spaces.}\\

Note that both limit criteria, the inverse limit criterion
and the Gromov-Hausdorff limit criterion, are consequences of the constructions in~\cite{BFMW07}, specifically Lemma 7.2 therein.

In a slightly modified spirit of Marde\v{s}i\'{c} and Segal \cite{MS1, MS2} one can state the following: An ANR
space $X$ is a generalized $n$-manifold if $X$ is a $\mathcal{C}_n$-space, where $\mathcal{C}_n$ is the class of $n$-dimensional 2-patch spaces. A more precise description can be given in terms of the Gromov-Hausdorff space which contains $\mathcal{C}_n$. It is a complete metric space with respect to the Gromov-Hausdorff metric $d_G$. 

We prove in Section~4.3
 that a compact generalized $n$-manifold is a limit of elements of $\mathcal{C}_n$ with respect to the metric $d_G$, i.e. the frontier of $\mathcal{C}_n$ in the Gromov-Hausdorff space consists of compact generalized $n$-manifolds 
(here $n\geq 6$, as usual).

\section{Manifolds and generalized manifolds}\label{S:Manifolds}

\subsection{Preliminaries}\label{SS:Definitions}
\begin{definition}
Let $X$ be a nonempty separable metric space and $k\ge 0$ any integer. Then 
\begin{itemize}
\item $X$ is said to have dimension at most $k$,
 ${\hbox{\rm dim}} X \leq k$, if for any open covering $\{\mathcal{U}_\alpha\}_{\alpha \in \mathcal{J}}$ of $X$ there is an open refinement $\{\mathcal{V}_i\}_{i \in \mathcal{L}}$ such that any $k+1$ elements of $\{\mathcal{V}_i\}_{i \in \mathcal{L}}$ have empty intersection;
 \item $X$ is said to be  $k$-dimensional, ${\hbox{\rm dim}} X = k$, 
 if ${\hbox{\rm dim}} X \leq k$ and ${\hbox{\rm dim}} X \not\leq k-1$.
 \item $X$ is said to be infinite-dimensional, ${\hbox{\rm dim}} X = \infty$, if ${\hbox{\rm dim}} X \not\leq k$ for every $k \geq 0$.
 \end{itemize}
\end{definition}

\begin{definition}
A topological space $X$ is called {a Euclidean neighborhood retract} (ENR) if $X$ embeds in some $m$-dimensional Euclidean space $\mathbb{R}^m$ as a closed subset so that there is a neighborhood $N \subset \mathbb{R}^m$ of $X$ which retracts onto $X$.
\end{definition}

 It's well-known that a separable metric space $X$ is an ENR if
$X$ is locally contractible and
	${\hbox{\rm dim}}X < \infty$.

\begin{definition}[see~\cite{CavHegRep16}]
A topological space
$X$ is called a {generalized $n$-manifold}, $n\in \mathbb{N}$,
if it satisfies the following properties:
\begin{itemize}
	\item[(i)] $X$ is an $n$-dimensional separable metric ENR; and
	\item[(ii)] for every $x\in X, H_\ast (X, X\setminus \{x\}, \mathbb{Z}) \cong H_\ast (\mathbb{R}^n, \mathbb{R}^n\setminus \{0\}, \mathbb{Z})$.
\end{itemize}

Furthermore,
$X$ is called a {generalized $n$-manifold with boundary} $\partial X \subset X$, 
if  $\partial X$ is also an ENR, and  the boundary $\partial X$ of $X$ is characterized by the following property $H_\ast (X, X\setminus \{x\}, \mathbb{Z}) \cong 0$ for every $x \in \partial X$ (see~\cite{Mit90}).
\end{definition}
Instead of ENR generalized manifolds, one often considers ANR generalized manifolds.
\begin{remark}[see~\cite{CavHegRep16}]

 For a separable metric space $X$ with ${\hbox{\rm dim}} X < \infty$, the following conditions are equivalent:
\begin{enumerate}
	\item[(1)] $X$ is an ANR.
	\item[(2)] $X$ is locally contractible.
	\item[(3)] $\hbox{\rm dim} X = k$ and $X$ is locally $k$-connected.
\end{enumerate}
\end{remark}

Remarkable properties of compact ENR's, hence of compact generalized manifolds, are expressed by the following result.
\begin{theorem}
The following properties are equivalent:
\begin{enumerate}[label=(\alph*)]
	\item $X$ is an ENR.
	\item For some $m$, there exist an embedding $\varphi: X \rightarrow \mathbb{R}^m$ and a mapping cylinder neighborhood $N \subset \mathbb{R}^m$ of $\varphi (X)$ in $\subset \mathbb{R}^m$.
	\item For some $n$, $X$ is the cell-like image of a compact manifold $M^n$ (possibly with boundary $\partial M^n$).
\end{enumerate}
\end{theorem}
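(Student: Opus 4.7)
My plan is to prove the cycle of implications $(a) \Rightarrow (b) \Rightarrow (c) \Rightarrow (a)$, since the statement is an equivalence of three conditions on a compact space.

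For $(a) \Rightarrow (b)$, I would start from a closed embedding $\varphi_0: X \hookrightarrow \mathbb{R}^m$ together with a neighborhood retraction $r_0: N_0 \to X$ that is guaranteed by the ENR hypothesis, and then upgrade $N_0$ to a genuine mapping cylinder neighborhood. The essential tool here is the mapping cylinder neighborhood theorem of West and of Chapman--Siebenmann: after stabilizing the embedding (replacing $\varphi_0$ by $\varphi_0 \times \{0\}: X \hookrightarrow \mathbb{R}^{m+k}$ for $k$ sufficiently large, so that the codimension is high enough to perform general position arguments), one obtains a compact topological manifold neighborhood $N$ of $\varphi(X)$ together with a retraction $r: \partial N \to X$ such that $N$ is homeomorphic rel $X$ to the mapping cylinder of $r$. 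This is the only non-formal input in the whole argument.

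For $(b) \Rightarrow (c)$, I would simply take the manifold $M^n := N$. The canonical mapping cylinder projection
\[
p: N = \mathrm{Cyl}(r) \longrightarrow X
\]
is continuous and surjective, and the preimage of $x \in X$ is the closed cone on the compact set $r^{-1}(x) \subset \partial N$, which is contractible. A compact contractible subset of the ANR $N$ has trivial shape, so $p$ is a cell-like map from the compact manifold with boundary $N$ onto $X$.

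For $(c) \Rightarrow (a)$, given a cell-like map $f: M^n \to X$ with $M$ a compact manifold (possibly with boundary), the space $X$ is automatically compact metric. Since cell-like maps from finite-dimensional compacta do not raise covering dimension, $\dim X \leq n < \infty$. The theorem of Kozlowski (refined by Haver) asserts that a cell-like image of a finite-dimensional compact ANR is again a compact ANR. Combined with the Borsuk characterization recalled just before the theorem, a finite-dimensional compact ANR is the same as a compact ENR, so $X$ is an ENR.

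The main obstacle is the mapping cylinder neighborhood step in $(a) \Rightarrow (b)$: the existence of such a regular cone-like decomposition of a neighborhood is not at all elementary and requires nontrivial input from topological manifold theory, in contrast to the other two implications which are essentially structural (for $(b) \Rightarrow (c)$) or follow from classical ANR theory (for $(c) \Rightarrow (a)$).
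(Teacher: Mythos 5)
The paper itself offers no proof of this theorem: it is stated as a known ``remarkable property'' of compact ENRs, with the meaning of (b) and (c) explained and the reader sent to the literature (Lacher's survey, Ferry's notes). So your attempt can only be judged on its own merits. The overall architecture $(a)\Rightarrow(b)\Rightarrow(c)\Rightarrow(a)$ is the standard one, and the first two implications are essentially right: the existence of a mapping cylinder neighborhood in sufficiently high codimension is indeed the one deep input (though the standard reference for this finite-dimensional statement is Quinn's ``Ends of maps, I'' --- the mapping cylinder neighborhood theorem for $1$-LCC, codimension $\geq 3$ embedded ANRs --- rather than West or Chapman--Siebenmann, whose theorems concern finiteness of homotopy type and Hilbert cube manifolds), and the mapping cylinder projection has point preimages which are cones, hence compact, contractible, finite-dimensional, and therefore cell-like in the sense of the paper's Definition. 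Note also that $(b)\Rightarrow(a)$ is immediate from the paper's definition of ENR, since the cylinder projection retracts the open neighborhood $\mathrm{Int}\,N$ onto $X$; you do not actually need to pass through (c) to close that part of the cycle.

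The genuine gap is in $(c)\Rightarrow(a)$. Your assertion that ``cell-like maps from finite-dimensional compacta do not raise covering dimension'' is false: this was precisely the cell-like dimension-raising problem, settled in the negative by Dranishnikov. From a compactum $Y$ with $\dim_{\mathbb{Z}}Y=3$ but $\dim Y=\infty$, the Edwards--Walsh resolution theorem produces a cell-like surjection onto $Y$ from a $3$-dimensional compactum $Z$; embedding $Z$ in $S^7$ and trivially extending the decomposition yields a cell-like map from the compact manifold $S^7$ onto an infinite-dimensional compactum, which (by Kozlowski's theorem that the cell-like image of a finite-dimensional compact ANR is an ANR if and only if it is finite-dimensional) is not an ANR. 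So condition (c), read literally, does not imply (a): the implication needs the standing hypothesis $\dim X<\infty$, which is surely intended here since the whole paper works with finite-dimensional ENRs and generalized manifolds, but which you must state rather than derive. Once $\dim X<\infty$ is assumed, your appeal to Kozlowski--Haver is the correct argument: a cell-like map of compacta with finite-dimensional image is a hereditary shape equivalence, the image of a compact ANR under a hereditary shape equivalence is an ANR, and a finite-dimensional compact ANR is an ENR. The fix is therefore to delete the false ``does not raise dimension'' claim and to make the finite-dimensionality of $X$ an explicit hypothesis of the implication $(c)\Rightarrow(a)$.
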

\noindent
Here, (b) means that $N$ is homeomorphic to the mapping cylinder of a map $r: \partial N \rightarrow X$, denoted as
$\partial N \times I \underset{r}{\cup} X$. The homeomorphism $N \cong \partial N \times I \cup X$ is the identity on $\partial N$ and $X$. 
To explain (c), we recall the notion of a cell-like map
and some of its properties (see for instance~\cite{Fer92,Lac77}).

\begin{definition}
A compact subset $C \subset \hbox{\rm Int} M^n$, where $M^n$ is a topological $n$-manifold, is said to be \textit{cellular} in $M^n$  if it can be represented as follows:
$$C = \displaystyle\bigcap_{i = 1}^\infty B_i, \ \
\hbox{\rm where}\ \
B_i \subset \hbox{\rm Int} M^n\ \
\hbox{\rm  are}\ \
n\hbox{\rm -balls such that}\ \
B_{i+1} \subset \hbox{\rm Int} {B_i}, \ \
\hbox{\rm for}\ \
 i = 1, 2, \ldots.$$ 
 \end{definition}
 \begin{definition}
A surjective map 
$f: Y \rightarrow X$ is said to be \textit{cell-like} if for every $x \in X$, 
the preimage $f^{-1}(x)$ is a cell-like set,
i.e. there exists an embedding 
$$\varphi:f^{-1}(x)\hookrightarrow \hbox{\rm Int} M^n$$ 
into some topological $n$-manifold $M^n$ such that $\varphi(f^{-1}(x))$ is  cellular in $M^n$.
\end{definition}
The following characterization is very useful (see~\cite{Lac77}).

\begin{theorem}
Let $X$ and $Y$ be ENR spaces and $f: Y \rightarrow X$ a proper map. The following properties are equivalent:
\begin{enumerate}[label=(\alph*)]
	\item The map $f$ is  cell-like.
	\item For all open contractible subsets $K \subset X$, $f^{-1}(K) \subset Y$ is contractible.
	\item For all open subsets $U \subset X$, the restriction $f\hspace{-0.3em}\mid_{f^{-1}(U)}: f^{-1}(U) \rightarrow U$ is a proper homotopy equivalence.
\end{enumerate}
\end{theorem}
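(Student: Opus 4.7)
The plan is to establish the cycle (c) $\Rightarrow$ (b) $\Rightarrow$ (a) $\Rightarrow$ (c), with the last implication being the deepest. The implication (c) $\Rightarrow$ (b) is essentially by definition: if $f|_{f^{-1}(K)}:f^{-1}(K)\to K$ is a proper homotopy equivalence and $K$ is contractible, then $f^{-1}(K)$ is homotopy equivalent to a point and hence contractible.

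For (b) $\Rightarrow$ (a), fix a point $x\in X$. Since $X$ is an ENR, it is locally contractible, so $x$ admits a countable basis of open contractible neighbourhoods $K_1\supset K_2\supset\cdots$ with $\bigcap_i K_i=\{x\}$. By hypothesis every $f^{-1}(K_i)$ is contractible, and $f^{-1}(x)=\bigcap_i f^{-1}(K_i)$. Because $Y$ is also an ENR, embed it as a closed subset of some $\mathbb{R}^m$; any open neighbourhood $V$ of $f^{-1}(x)$ in $Y$ then contains $f^{-1}(K_i)$ for $i$ sufficiently large, and the inclusion $f^{-1}(K_{i+1})\hookrightarrow f^{-1}(K_i)$ is null-homotopic since both sets are contractible. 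This is precisely the UV$^{\infty}$ property of the compactum $f^{-1}(x)$, which, by McMillan's cellularity criterion (or, equivalently, the shape-theoretic characterisation of cell-like compacta), produces an embedding of $f^{-1}(x)$ as a cellular subset of some topological manifold, as required by the definition.

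The heart of the argument is (a) $\Rightarrow$ (c). For any open $U\subset X$ the restriction $f|_{f^{-1}(U)}:f^{-1}(U)\to U$ is again a proper cell-like surjection between ENRs, so it suffices to prove that every such map is itself a proper homotopy equivalence. I would follow the classical route: the Vietoris--Begle mapping theorem, combined with the \v{C}ech-acyclicity of cell-like compacta, shows that $f$ induces isomorphisms on singular (co)homology with any local coefficient system, so $f$ is a weak homotopy equivalence. Since ENRs are dominated by finite CW complexes (Borsuk), the Whitehead theorem then upgrades this to an ordinary homotopy equivalence. The nontrivial task is to make the homotopy inverse and the connecting homotopies \emph{proper}, with adequate control over the ends. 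This is achieved either via the approximate lifting property enjoyed by cell-like maps between ANRs — which permits locally constructed inverses to be glued with a proper control function — or by appealing directly to Haver's theorem (see~\cite{Lac77}) asserting that every proper cell-like surjection between ANRs is a hereditary proper homotopy equivalence.

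The main obstacle in the whole scheme is precisely this last upgrade from a pointwise or weak statement to a proper, parametrised one. Producing a homotopy inverse up to ordinary homotopy is a formal consequence of acyclic fibres plus Whitehead, but guaranteeing that the inverse is proper, and that the homotopies $fg\simeq\mathrm{id}$ and $gf\simeq\mathrm{id}$ restrict well over \emph{every} open $U\subset X$ (so that the resulting equivalence is hereditary), requires the finer structure theory of cell-like maps between absolute neighbourhood retracts; this is where the bulk of the work of Lacher, Haver and Chapman is invested, and I would quote it rather than reprove it here.
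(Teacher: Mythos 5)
The paper does not actually prove this statement: it is quoted verbatim from Lacher's survey \cite{Lac77} and used as a black box, so there is no in-paper argument to measure you against. Your outline follows the standard route one would find in Lacher, and you correctly identify where the real content lies, namely that a proper cell-like surjection between ANRs is a \emph{hereditary} proper homotopy equivalence; deferring that to Lacher/Haver is exactly what the paper itself does. In that sense your proposal is consistent with the source.

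Two soft spots in the sketch deserve attention if you ever wrote it out. First, in (b) $\Rightarrow$ (a) you assert that local contractibility of the ENR $X$ yields a countable neighborhood basis of \emph{open contractible} sets at each point. Local contractibility only gives, for each neighborhood $U$ of $x$, a smaller neighborhood $V$ whose inclusion $V\hookrightarrow U$ is null-homotopic; it does not by itself produce open sets that are contractible in themselves, and condition (b) gives you no information unless such sets exist arbitrarily close to $x$. This needs either a separate argument or a citation; note also that McMillan's cellularity criterion is about cellularity in a \emph{given} manifold and is not the tool here --- what you actually need is the equivalence ``$UV^\infty$ $\Leftrightarrow$ trivial shape $\Leftrightarrow$ cell-like,'' which you do mention parenthetically and which suffices. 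Second, in (a) $\Rightarrow$ (c), Vietoris--Begle plus homology isomorphisms with local coefficients does not yield a weak homotopy equivalence without first knowing that $f$ induces an isomorphism on $\pi_1$; for cell-like maps between ANRs that comes from the $UV^1$ lifting property, not from the mapping theorem, so the $\pi_1$ step should be made explicit. Neither point is fatal, since the hereditary proper homotopy equivalence theorem you quote subsumes both, but as written the intermediate steps have gaps.
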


The properties of cell-like maps are related to controlled topology, and this is extremely important, because it links the resolution problem~\cite{Edw78} to Quinn's invariant~\cite{Qui87}. There is extensive literature on the subject, let us mention~\cite{BrFeMi96, CavHegRep16, Fer92, Fer94, Lac77, Qui79, Rep94}. Here are some definitions and properties.

\begin{definition}\label{mapping}
A mapping $f: Y \rightarrow X$ between compact ENR's $Y$ and $X$
is said to have:
\begin{itemize}

\item the $UV^k (\varepsilon)$-\textit{property}, 
where $k\in\mathbb{N}$ and $\varepsilon>0$,
 if for every pair $(K, L)$ of complexes $L\subset K$
of dimension $\leq k+1$ and every pair of maps $(\alpha, \alpha_0) : (K, L) \rightarrow (X, Y)$, the commutative diagram


\begin{center}
\begin{tikzpicture}
  \node (L) {$L$};
  \node (K) [below of=L] {$K$};
  \node (Y) [right of=L] {$Y$};
  \node (X) [right of=K] {$X$};
  \draw[->, font=\small] (L) to node {$\alpha_0$} (Y);
  \node (U) [node distance=1cm, below of=L] {$\cap$};
  \draw[->, font=\small] (K) to node {$\alpha$} (X);
  \draw[->, font=\small] (Y) to node {$f$} (X);
  \draw[->, font=\small, dashed] (K) to node {$\overline{\alpha}$} (Y);
\end{tikzpicture}
\end{center}

\noindent 
can be completed by a map $\overline{\alpha}: K \rightarrow Y$ such that $\overline{\alpha} |_L = \alpha_0$ and there is a homotopy $H: K\times I \rightarrow X$ between $f\circ \overline{\alpha}$ and $\alpha$ with tracks $\{H (x, t)| t\in I\}$ of diameter $<\varepsilon$.

\item   the $UV^k$-property,
where $k\in\mathbb{N}$,
 if it has the  $UV^k (\varepsilon)$-property for all $\varepsilon>0$.

\item   the $UV^\infty (\varepsilon)$-property,
where $\varepsilon>0$,
 if it has the $UV^k (\varepsilon)$-property for all $k\in\mathbb{N}$.

\item   the
 $UV^\infty$-property
  if it has the $UV^k$-property for all $k\in\mathbb{N}$.
\end{itemize}
\end{definition}

\begin{definition}\label{homotopy}
The homotopy $H: K \times I \rightarrow X$  in Definition~\ref{mapping}
is called an $\varepsilon$-homotopy. It is now obvious what an $\varepsilon$-homotopy equivalence means.\end{definition}

The following was proved in~\cite{Lac77}: 
\begin{theorem}
Let $f: Y \rightarrow X$ be a surjective map between compact ENR's. Then the following properties are equivalent:
\begin{enumerate}
	\item[(i)] $f: Y \rightarrow X$ is a cell-like map.
	\item[(ii)] $f: Y \rightarrow Xf$ is an $\varepsilon$-homotopy equivalence for all $\varepsilon >0$.
	\item[(iii)] $f: Y \rightarrow X$ is a $UV^\infty$-map.
\end{enumerate}
\end{theorem}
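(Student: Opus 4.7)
The plan is to prove the cycle $\mathrm{(i)}\Rightarrow\mathrm{(iii)}\Rightarrow\mathrm{(ii)}\Rightarrow\mathrm{(i)}$, using as the main tool the preceding characterization of cell-like maps: preimages $f^{-1}(U)$ of open contractible subsets $U\subset X$ are contractible.

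For $\mathrm{(i)}\Rightarrow\mathrm{(iii)}$, fix $\varepsilon>0$ and a commutative square $(\alpha,\alpha_0):(K,L)\to(X,Y)$ with $\dim K\le k+1$. Triangulate $K$ so finely that each closed simplex $\sigma$ has $\alpha(\sigma)$ contained in a contractible open set $U_\sigma\subset X$ of diameter less than $\varepsilon/3$; such $U_\sigma$ exist because the ENR $X$ is locally contractible. By the preceding characterization, each $f^{-1}(U_\sigma)$ is contractible. I extend $\alpha_0$ over the simplices of $K\setminus L$ by induction on dimension, filling the lifted boundary of each $\sigma$ inside $f^{-1}(U_\sigma)$, and I simultaneously build the tracking homotopy $H:K\times I\to X$ between $\alpha$ and $f\circ\overline{\alpha}$ so that tracks through any point $x\in K$ stay within the union of the $U_\tau$'s corresponding to simplices touching $x$; refining the mesh then gives $\mathrm{diam}(H(\{x\}\times I))<\varepsilon$.

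For $\mathrm{(iii)}\Rightarrow\mathrm{(ii)}$, I use that the compact ENR $X$ is $\varepsilon$-dominated by a finite polyhedron $P$ via maps $X\xrightarrow{u}P\xrightarrow{v}X$ with $v\circ u$ arbitrarily close to $\mathrm{id}_X$. Applying $UV^\infty$ to lift $v$ through $f$, I obtain $\tilde v:P\to Y$ with $f\circ\tilde v$ close to $v$, and I set $g_\varepsilon=\tilde v\circ u:X\to Y$; then $f\circ g_\varepsilon$ is $\varepsilon$-close to $\mathrm{id}_X$. A second application of $UV^\infty$ to the pair $(Y\times I,Y\times\{0,1\})$, with $g_\varepsilon\circ f$ on one end and $\mathrm{id}_Y$ on the other, produces a homotopy between them whose $f$-image has small tracks, i.e.\ an $\varepsilon$-homotopy equivalence structure.

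For $\mathrm{(ii)}\Rightarrow\mathrm{(i)}$, fix $x\in X$ and a neighborhood $U$ of $f^{-1}(x)$ in $Y$. Choose $\varepsilon>0$ small enough that the $\varepsilon$-neighborhood of $f^{-1}(x)$ is contained in $U$, take a corresponding $\varepsilon$-inverse $g$ with homotopy $g\circ f\simeq_\varepsilon\mathrm{id}_Y$, and restrict that homotopy to $f^{-1}(x)\times I$: the result is a null-homotopy of $\mathrm{id}_{f^{-1}(x)}$ inside $U$. Hence $f^{-1}(x)$ has the $UV^\infty$ shape property and, being a finite-dimensional compactum, is cell-like. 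The main obstacle lies in $\mathrm{(i)}\Rightarrow\mathrm{(iii)}$: the inductive construction of $\overline{\alpha}$ and the tracking homotopy $H$ must be coordinated so that the final track-diameters are uniformly bounded by $\varepsilon$, which forces the triangulation mesh to match \emph{both} the modulus of continuity of $\alpha$ and the local contractibility modulus of $X$ at the chosen scale, and care is required so that this matching can be done globally rather than only on stars of individual simplices.
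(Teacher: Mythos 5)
The paper does not actually prove this theorem; it is quoted from Lacher's survey \cite{Lac77}, so there is no in-text argument to compare against. Your architecture --- the cycle (i)$\Rightarrow$(iii)$\Rightarrow$(ii)$\Rightarrow$(i), driven by the characterization of cell-like maps via contractibility of preimages of contractible open sets --- is the standard route and is the one underlying the cited source. Two of your three implications are essentially sound: in (iii)$\Rightarrow$(ii) the domination trick and the relative lift over $(Y\times I, Y\times\{0,1\})$ are correct in outline (though you should note that the $UV^k(\varepsilon)$ property is defined only for pairs of complexes, so applying it to $Y\times I$ requires the standard extension to finite-dimensional compacta, e.g.\ via a dominating polyhedron); and in (ii)$\Rightarrow$(i) the argument works, except that the control of the homotopy $g\circ f\simeq \mathrm{id}_Y$ is measured in $X$ after composing with $f$, so the contraction of $f^{-1}(x)$ stays in $f^{-1}(B_\varepsilon(x))$ rather than in a metric $\varepsilon$-neighborhood of $f^{-1}(x)$ in $Y$; you need the (automatic, by compactness) properness of $f$ to conclude that $f^{-1}(B_\varepsilon(x))\subset U$ for $\varepsilon$ small. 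As written, your phrase ``the $\varepsilon$-neighborhood of $f^{-1}(x)$'' conflates the two metrics.

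The genuine gap is in (i)$\Rightarrow$(iii), and it is exactly the point you flag but do not resolve. Assigning a single contractible set $U_\sigma$ to each simplex and ``filling the lifted boundary of $\sigma$ inside $f^{-1}(U_\sigma)$'' does not work: by the time you reach a $j$-simplex $\sigma$, the lift $\overline{\alpha}|_{\partial\sigma}$ has been constructed inside $\bigcup_{\tau<\sigma} f^{-1}(U_\tau)$, and nothing forces this union to lie in $f^{-1}(U_\sigma)$; a face $\tau$ is shared by many top simplices, so one cannot arrange $U_\tau\subset U_\sigma$ for all incidences simultaneously, and refining the mesh of a single cover does not create the needed nesting. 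The standard repair is a tower of covers indexed by skeletal dimension: using compactness and local contractibility of $X$, choose $\eta_{k+1}=\varepsilon/(k+3)$ and then $\eta_{j-1}>0$ so small that every subset of $X$ of diameter $<\eta_{j-1}$ contracts inside a contractible open set of diameter $<\eta_j$; triangulate $K$ so that $\alpha$ of each closed star has diameter $<\eta_{-1}$, and perform the induction so that the lift over the $j$-skeleton, together with its tracks, has $f$-image of diameter $<\eta_j$ near each simplex. With that device (and the track-diameter bound $\varepsilon/(k+3)$ rather than $\varepsilon/3$, since a track through a point of $K$ passes through up to $k+2$ of these sets), your argument closes up. Without it, the inductive step as stated fails at dimension $j\geq 1$.
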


\subsection{Recognizing topological manifolds among generalized ma\-ni\-folds}\label{SS:Recognizing}

Obviously, every topological $n$-manifold is a generalized $n$-manifold. An answer to the converse problem was given by Edwards~\cite{Edw78}:

\begin{definition}
A metric space $X$ is said to have the disjoint disks property (DDP) if for any $\varepsilon >0$
 and any maps $\alpha_1, \alpha_2: D^2 \rightarrow X$ of the 2-disk $D^2$ into $X$, there exist maps $\beta_1, \beta_2: D^2 \rightarrow X$ such that $\text{dist}(\alpha_i (x), \beta_i (x)) < \varepsilon$ for all $x \in D^2, i \in \{1, 2\}$, and $\beta_1 (D^2) \cap \beta_2 (D^2) = \emptyset$. 
\end{definition}

\begin{theorem}
Let $X^n$ be a generalized $n$-manifold, $n\geq 5$, and suppose that $X^n$ satisfies the DDP. Then $X^n$ is homeomorphic to a topological $n$-manifold if there exist a topological $n$-manifold $M^n$ and a cell-like-map $f: M^n \rightarrow X^n$. In fact, such $f$ can then be approximated by homeomorphisms.
\end{theorem}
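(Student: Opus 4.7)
My plan is to prove that $f$ is a \emph{near-homeomorphism}, i.e.\ can be uniformly approximated by homeomorphisms $M^n \to X^n$; any single such approximating homeomorphism then witnesses $X^n \cong M^n$ as topological spaces, so the final ``approximation'' sentence of the theorem already subsumes the main conclusion. The standard vehicle is Bing's shrinking criterion: since $M^n$ and $X^n$ are both compact ANRs, $f$ is a near-homeomorphism if and only if the cell-like decomposition $\mathcal{G} = \{f^{-1}(x) : x \in X^n\}$ is \emph{shrinkable}, meaning that for every $\varepsilon > 0$ and every open cover $\mathcal{U}$ of $X^n$ there exists a homeomorphism $h : M^n \to M^n$ such that $f \circ h$ is $\mathcal{U}$-close to $f$ and $\mathrm{diam}(h(f^{-1}(x))) < \varepsilon$ for every $x \in X^n$. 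So the task is reduced to producing such shrinking homeomorphisms.

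To construct $h$ I would fix a sufficiently fine topological handle decomposition of $M^n$ (available because $n \geq 5$) and shrink the elements of $\mathcal{G}$ handle by handle. For $0$- and $1$-handles the shrinking is a routine Bing move inside a ball. The decisive step is at $2$-handles: their cores are maps of $D^2$ into $M^n$, pushed down by $f$ to maps of $D^2$ into $X^n$. The DDP allows these $f$-images --- more generally, any two maps $\alpha_1,\alpha_2 : D^2 \to X^n$ associated to the decomposition clusters one wishes to separate --- to be $\varepsilon$-approximated by maps with pairwise disjoint images. Because $f$ is cell-like, the characterization recalled just before Definition~\ref{mapping} guarantees that preimages of contractible open sets are contractible; this lets one lift the DDP general-position output back to $M^n$, yielding $2$-handle cores whose $f$-saturations are disjoint. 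Higher-index handles are then absorbed by standard topological engulfing, which is again available in codimension $\geq 3$, hence for $n \geq 5$.

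The main obstacle --- and the content of Edwards' cell-like approximation theorem proper --- is to carry out this shrinking \emph{globally, simultaneously on all members of $\mathcal{G}$, and without spoiling the $\mathcal{U}$-tracking achieved at earlier stages}. This calls for a hierarchical $\varepsilon$-$\delta$ induction in which each successive shrinking move lives in a drastically finer cover than the preceding one, interlaced with repeated applications of the DDP to separate progressively smaller singular strata. The combinatorial heart is an iterative Bing/Cannon ``starlike-equivalent decomposition'' construction that converts the local disjointness output of the DDP into a single global homeomorphism $h$ satisfying both tracking conditions of the shrinking criterion. Once $h$ has been produced for every pair $(\varepsilon, \mathcal{U})$, Bing's criterion returns a sequence of homeomorphisms uniformly approximating $f$, giving both the homeomorphism $X^n \cong M^n$ and the approximation statement.
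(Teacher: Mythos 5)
The paper does not prove this statement at all: it is Edwards' cell-like approximation theorem, quoted from \cite{Edw78}, and the text immediately moves on to Quinn's resolution theory. So there is no internal proof to compare against; your argument must stand on its own as a proof of one of the deepest theorems in geometric topology, and it does not. Your reduction to Bing's shrinking criterion is correct and is indeed the standard first step (for a cell-like map between compact ANRs, being a near-homeomorphism is equivalent to shrinkability of the induced decomposition), and you correctly identify the DDP as the hypothesis that drives the construction. But everything after that describes what a proof would have to accomplish rather than supplying one: the ``hierarchical $\varepsilon$-$\delta$ induction,'' the ``iterative Bing/Cannon starlike-equivalent decomposition construction,'' and the claim that higher-index handles are ``absorbed by standard topological engulfing'' are precisely the content of Edwards' theorem, and you defer all of them.

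Moreover, one step is wrong as stated, not merely unproven. You propose to apply the DDP to the $f$-images of $2$-handle cores in $X^n$, obtain disjoint approximating disks downstairs, and then ``lift the DDP general-position output back to $M^n$'' using the fact that cell-like maps have contractible preimages of contractible open sets. Contractibility of preimages gives no disjointness control whatsoever: two disjoint disks in $X^n$ can have badly overlapping $f$-saturations in $M^n$, and nothing in the $UV^\infty$ property separates them. The actual mechanism in Edwards' proof runs in the opposite direction: one uses the DDP to approximate the composites $D^2 \to M^n \to X^n$ by maps over which $f$ is one-to-one, i.e.\ one improves $f$ itself to be injective over finer and finer $2$-complexes, and only then does shrinking the nondegeneracy set become a codimension-three problem accessible to engulfing. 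Reproducing that argument is not feasible in this space, which is presumably why the paper states the theorem with a citation and no proof; if you want to model the missing steps, Daverman's book on decompositions of manifolds is the standard self-contained reference.
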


At this point, Quinn invented controlled surgery theory to construct maps $f_\varepsilon: M^n_\varepsilon \rightarrow X$ which are $\varepsilon$-homotopy equivalences (over $X$) between $n$-manifolds $M^n_\varepsilon$, $n\geq 5$.
Choosing a sequence $\{\varepsilon_i\}$ with $\varepsilon_i \rightarrow 0$, one can construct a ``telescope''-manifold $N^{n+1}$ with an end-manifold $M^n$ (applying End Theorem~\cite{Qui83}), and a map being an $\varepsilon$-homotopy equivalence for all $\varepsilon > 0$. Hence $f: M \rightarrow X$ is a cell-like map.

The manifolds $M^n_\varepsilon$ and the maps $f_\varepsilon : M^n_\varepsilon \rightarrow X$ are constructed by controlled surgery~(\cite{Fer94, Qui83, Qui87}, revised in~\cite{CavHegRep03}, see also~\cite{CavHegRep16}), starting from a controlled surgery problem $(g,b): V^n \rightarrow X^n$, where $b$ is an appropriate bundle map.

As usually, there are obstructions to complete surgery in the middle dimension to obtain an $\varepsilon$-homotopy equivalence. It turns out that there is only an integer obstruction $i(X)$ if one starts with an appropriate problem $(g,b): V^n \rightarrow X$ (see~\cite{Qui83, Qui87}).
Actually, the proof given in~\cite{Qui83} (and later corrected in~\cite{Qui87}) is of local nature.
The existence of a canonical controlled surgery problem $(g,b): V^n \rightarrow X$ follows from results in~\cite{FerPed95}.

\section{Poincar\'{e} duality structures on generalized manifolds and simple type}\label{Poincare}

A generalized manifold $X^n$ is not \`{a} priori a Poincar\'{e} duality complex in the sense of Wall~\cite{Wall70}. First of all, $X$ is not a CW
complex. Even if we know that $X^n$ is homotopy equivalent to a finite CW
complex $K$~\cite{West77}, it is not at all clear that $K$ is a Poincar\'{e} duality complex with respect to local coefficients, i.e. that $K$ satisfies the Poincar\'{e} duality with coefficients in 
$\Lambda = \mathbb{Z}[\pi_1 (K)]$ (shortly, $\Lambda$-PD complex).

It appears that the notion of ``$\Lambda$-PD complex structure'' is appropriate in this context. 
Moreover, Wall's definition requires that the PD
isomorphism is a simple equivalence on the chain complex level, i.e. its Whitehead torsion vanishes. For details we refer to~\cite{Chapman77, Chapman80, Coh73, Fer92, Fer94, Mil66, RhMaKe67}. In the sequel, $X$ will denote a compact oriented generalized $n$-manifold without boundary (if necessary, $n\geq 5$).

\subsection{The Poincar\'{e} duality over $\mathbb{Z}$}\label{SS:Poincare}

Sheaf-theoretical methods imply that there is a fundamental class $[X] \in H_n (X, \mathbb{Z})$ such that 
$$\cap [X] : H^q (X, \mathbb{Z}) \rightarrow H_{n-q} (X, \mathbb{Z})$$ 
is an isomorphism. A representing cycle of $[X]$ (also denoted $[X]$) defines a chain equivalence $S^q (X) \rightarrow S_{n-q} (X)$ on the singular (co-)chain level.

At this point we know that there is an embedding $X \subset \mathbb{R}^m$, for $m$ sufficiently large, 
 a mapping cylinder neighborhood $N$ of $X$, and a map
 $r: \partial N \rightarrow X$, which is homotopy equivalent to a spherical fibration (Spivak fibration) $\partial E \nu_X \rightarrow X$ (see~\cite[Theorem~A]{Browd72}). 

\subsection{$\Lambda$-PD complex structures}\label{SS:Lambda}

The neighborhood $N$ of $X$ is a (smooth) manifold with boundary, hence a simplicial complex. 
Let $C_\# (N, \Lambda)$ be the cellular chain complex of the universal cover $\widetilde{N}$ of $N$, and 
$$C^\# (N, \Lambda) = \text{Hom}_{\Lambda} (C_\# (N, \Lambda), \Lambda)$$ 
the cellular cochain complex. 
Similarly, $C_\# (N, \partial N, \Lambda)$ and 
$$C^\# (N, \partial N, \Lambda), \Lambda = \mathbb{Z}[\pi_1(X)].$$

Note that
 $C^\# (N, \Lambda)$
 is equivalent to
 $ C^\#_c (\widetilde{N})$, the cochain complex of $\widetilde{N}$ with compact support~(\cite[pp.~358--360]{CarEil56}).

Let $[N] \in C_m (N, \partial N)$ represent the fundamental class, and let $[\mathcal{U}] \in C^{m-n} (N, \partial N)$ represent the Thom class in $H^{m-n} (N, \partial N, \mathbb{Z})$ coming from the spherical fibration $\partial E \nu_X \rightarrow X$. Then we have the following obvious diagram


\begin{align}\label{D:1}
\begin{tikzpicture}[node distance=3cm, auto]
  \node (Sc) {$S^\#_c (\widetilde{X})$};
  \node (Sn) [below of=L] {$S_{n-\#} (\widetilde{X})$};
  \node (C) [right of=Sc] {$C^\# (N, \Lambda)$};
  \node (Cm) [node distance=4cm, right of=C] {$C_{m - \#} (N, \partial N, \Lambda)$};
  \node (Cn) [below of=Cm] {$C_{n-\#} (N, \Lambda)$};
  \draw[->, font=\small] (Sc) to node {$r^\#$} (C);
  \draw[->, font=\small] (Sc) to node {$\bullet \cap [X]$} (Sn);
  \draw[->, font=\small] (C) to node {$\cap [N]$} (Cm);
  \draw[->, font=\small] (Cm) to node {$\cap [\mathcal{U}]$} (Cn);
  \draw[->, font=\small, above] (Cn) to node {$r_\#$} (Sn);
  \draw[->, font=\small] (C) to node {$\cap [\Sigma]$} (Cn);
\end{tikzpicture}
\end{align}

\noindent
(see Remark~\ref{Rem:SS3.2} at the end of Section~\ref{SS:Lambda}).
Here, $[\Sigma] \in C_n (N)$ is the image of $[N] \in C_m (N, \partial N)$, under the map $C_m (N, \partial N) \rightarrow C_n (N)$.

\begin{lemma}[see~\cite{Ran92}]
The pair ($C_\# (N, \Lambda), [\Sigma]$) is a symmetric (algebraic) complex of dimension $n$.
\end{lemma}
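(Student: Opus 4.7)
The plan is to obtain the symmetric structure on $C_\#(N,\Lambda)$ by transporting, via cap product with the Thom cocycle $[\mathcal{U}]$, the canonical symmetric Poincaré pair structure carried by the smooth manifold $(N,\partial N)$. Recall Ranicki's general construction~\cite{Ran92}: an equivariant Alexander--Whitney/Eilenberg--Zilber chain-level diagonal $\Delta$ is $\mathbb{Z}/2$-symmetric only up to a coherent system of higher chain homotopies $\{\Delta_s\}_{s\ge 0}$, so that any relative cycle $z$ of degree $k$ produces a family of chain maps $\{\Phi_s(z):C^{k-\#-s}\to C_\#\}_{s\ge 0}$ satisfying the identities of a symmetric structure of dimension $k$. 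Applied to the $\Lambda$-chain complex of the triangulated manifold $N$ and to its fundamental cycle $[N]\in C_m(N,\partial N,\Lambda)$, this yields the standard symmetric Poincaré pair structure on $(C_\#(N,\Lambda),C_\#(\partial N,\Lambda))$ of dimension $m$, with $\Phi_0([N])=\,\cap[N]$.

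First I would verify that $[\Sigma]=[\mathcal{U}]\cap[N]$ is a cycle in $C_n(N,\Lambda)$. Since $[N]$ is a relative cycle and $[\mathcal{U}]$ is a cocycle of the pair, the Leibniz rule gives $\partial[\Sigma]=\pm\,\delta[\mathcal{U}]\cap[N]\pm\,[\mathcal{U}]\cap\partial[N]=0$: the first summand vanishes because $[\mathcal{U}]$ is closed, and the second because $[\mathcal{U}]$ is supported on the pair, hence annihilates $\partial[N]\in C_{m-1}(\partial N,\Lambda)$. Next I would build the higher symmetric structure on $C_\#(N,\Lambda)$ by defining, for $s\ge 0$,
$$\phi_s \;:=\; \Phi_s([N])\circ (\,-\cup[\mathcal{U}])\;:\;C^{n-\#-s}(N,\Lambda)\;\longrightarrow\;C_\#(N,\Lambda),$$
where cup product with the cocycle $[\mathcal{U}]$ shifts cochain degree from $n$ to $m$ and converts cochains of $N$ into cochains of the pair $(N,\partial N)$. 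By associativity of cap product one checks $\phi_0=\,\cap[\mathcal{U}]\cap[N]=\,\cap[\Sigma]$, recovering precisely the bottom composite in diagram~(\ref{D:1}); the Ranicki identities satisfied by $\{\Phi_s([N])\}$ then transport, using $\delta[\mathcal{U}]=0$, to the analogous identities for $\{\phi_s\}$. This assembles $[\Sigma]$ into a class $\phi\in Q^n(C_\#(N,\Lambda))$.

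The step I expect to be the main obstacle is the sign and indexing bookkeeping: one must check carefully that capping a symmetric pair structure of dimension $m$ with a cocycle of degree $m-n$ does yield a symmetric (non-pair) structure of dimension $n$, with every $\phi_s$ receiving the correct degree shift and with all the $Q^n$-relations satisfied on the nose rather than merely up to further correction. This is a standard but nontrivial instance of the naturality of Ranicki's symmetric construction under cup and cap operations~\cite{Ran92}. I emphasize that the lemma asks only for a symmetric complex, \emph{not} a symmetric Poincaré complex: we are not claiming that $\phi_0=\,\cap[\Sigma]$ is a $\Lambda$-chain equivalence, and indeed this additional property is exactly the subtle point flagged in Remark~\ref{Rem:SS3.2} and motivating the alternative $2$-patch construction in Section~\ref{SS:Simple}.
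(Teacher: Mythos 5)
Your route is genuinely different from the paper's, and as written it has a gap at precisely the step you yourself flag as the main obstacle. You propose to transport the $m$-dimensional symmetric pair structure of $(N,\partial N)$ by setting $\phi_s=\Phi_s([N])\circ(-\cup[\mathcal{U}])$. The degree bookkeeping is fine and $\phi_0=\cap[\Sigma]$ comes out right, but the defining relations of a symmetric structure involve the transposition $T\phi_s=\pm\phi_s^{*}$, and the dual of your composite is $(-\cup[\mathcal{U}])^{*}\circ\Phi_s([N])^{*}$, with the Thom cocycle now on the wrong side. Identifying this with $\Phi_s([N])^{*}\circ(-\cup[\mathcal{U}])$ amounts to the self-adjointness of $\cup[\mathcal{U}]$ with respect to the Lefschetz duality of the pair, which holds only up to chain homotopy; consequently the relations for $\{\phi_s\}$ are \emph{not} satisfied on the nose, contrary to your claim, and the correcting homotopies would have to be fed back into the higher $\phi_s$. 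This can be carried out --- it is Ranicki's algebraic Thom complex / $S$-duality formalism --- but it is exactly the content your proposal leaves unproved.

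The paper's own argument avoids all of this. Since $[\Sigma]=[\mathcal{U}]\cap[N]$ is a cycle in $C_n(N)$ (your verification of this point is correct, and is a detail the paper omits), one applies the $\mathbb{Z}/2$-equivariant diagonal approximation
$$\Delta: C_n(N)\longrightarrow W\otimes_{\mathbb{Z}[\mathbb{Z}/2]}\bigl(C_{\#}(N,\Lambda)\otimes_{\Lambda}C_{\#}(N,\Lambda)\bigr)$$
directly to $[\Sigma]$ rather than to $[N]$; because $\Delta$ is an equivariant chain map, the image of a cycle is automatically an $n$-dimensional symmetric structure whose $\phi_0$ is $\cap[\Sigma]$, and there is nothing further to check. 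The lemma asks only for a symmetric complex, not a Poincar\'e one, so this one-step construction suffices; you are right that the chain-equivalence question is deferred to Remark~\ref{Rem:SS3.2} and Section~\ref{SS:Simple}.
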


\begin{proof}
For details we refer to~\cite{Ran92}. The symmetric property of the chain equivalence 
$$\cap [\Sigma] : C^\# (N, \Lambda) \rightarrow C_{n - \#}(N, \Lambda)$$
is defined as the image of $[\Sigma]$ under the usual diagonal approximation 
$$\Delta : C_n (N) \rightarrow W \underset{\mathbb{Z} [\sfrac{\mathbb{Z}}{2}]}{\otimes} (C_\# (N, \Lambda) \underset{\Delta}{\otimes} C_\# (N, \Lambda)),$$
where $W$ denotes the free $\mathbb{Z} [\sfrac{\mathbb{Z}}{2}]$-module resolutions of $\mathbb{Z}$ (the generator of $\sfrac{\mathbb{Z}}{2}$ acts on $\mathbb{Z}$ by multiplication by $-1$).
\end{proof}

\begin{remark}
$C_\# (N, \Lambda)$ is $\Lambda$-free and finitely generated, hence 
$$W \underset{\mathbb{Z} [\sfrac{\mathbb{Z}}{2}]}{\otimes} (C_\# (N, \Lambda) \underset{\Lambda}{\otimes} C_\# (N, \Lambda)) \cong
 \text{Hom}_{\mathbb{Z} [\sfrac{\mathbb{Z}}{2}]} 
  (W, \text{Hom}_{\Lambda} (C^\# (N, \Lambda), C_\# (N, \Lambda)).$$ 
The image of $[N]$ therefore gives a sequence of $\Lambda$-chain-maps 
$$\Phi_s : C ^r (N, \Lambda) \rightarrow C_{n- r +s} (N, \Lambda),\quad s = 0, 1, 2, \ldots ,$$
such that $\Phi_0 = \bullet \cap [\Sigma]$.
\end{remark}

\begin{definition}
A {\it symmetric algebraic $\Lambda$-chain complex} is a couple $(D_\#, \Phi)$, where $D_\#$ is a free $\Lambda$-complex, and $\Phi = (\phi_s)$ is an element in 
$$\text{Hom}_{\mathbb{Z} [\sfrac{\mathbb{Z}}{2}]} (W, \text{Hom}_{\Lambda} (D^\#, D_\#)).$$ 
It is a symmetric (algebraic) $\Lambda$-Poincar\'{e} complex if $\phi_0$ is a $\Lambda$-chain equivalence. If $\phi_0$ is a simple $\Lambda$-chain equivalence, $(D_\#, \Phi)$ is called a {\it simple} symmetric $\Lambda$-Poincar\'{e} complex.
\end{definition}
\begin{definition}
Let $X$ be a generalized manifold as above. A (simple) $\Lambda$-PD structure on $X$ is a commutative diagram


\begin{center}
\begin{tikzpicture}[node distance=2.5cm, auto]
  \node (Sc) {$S^\#_c (\widetilde{X})$};
  \node (Sn) [below of=L] {$S_{n-\#} (\widetilde{X})$};
  \node (D) [right of=L] {$D^\#$};
  \node (Dn) [right of=Sn] {$D_{n-\#}$};
  \draw[->, font=\small] (Sc) to node {$\alpha^\#$} (D);
  \draw[->, font=\small] (Sc) to node {$\bullet \cap [X]$} (Sn);
  \draw[->, font=\small] (Dn) to node [swap] {$\alpha_\#$} (Sn);
  \draw[->, font=\small] (D) to node {$\Phi_0$} (Dn);
\end{tikzpicture}
\end{center} 

\noindent
with $(D_\#, \Phi)$ a (simple) symmetric $\Lambda$-Poincar\'{e} complex and $\alpha$ a chain equivalence.
\end{definition}

\begin{lemma}\label{L:2}
Under the conditions as above, the restriction map $r: N \rightarrow X$ and $(C_\# (N, \Lambda), [\Sigma])$ determine a symmetric $\Lambda$-Poincar\'{e} structure on $X$. It is unique up to an equivalence.
\end{lemma}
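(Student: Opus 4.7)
The plan is to take $D_\# = C_\#(N,\Lambda)$ equipped with the symmetric structure $\Phi = (\Phi_s)$ constructed in the preceding remark, and to set $\alpha_\# = r_\#$, $\alpha^\# = r^\#$ using the retraction $r\colon N \to X$ of the mapping cylinder neighborhood. Three things must then be verified: first, that $\alpha$ is a $\Lambda$-chain equivalence; second, that the square in the definition of a $\Lambda$-PD structure commutes; and third, that $\Phi_0 = \bullet \cap [\Sigma]$ is a $\Lambda$-chain equivalence, so that $(D_\#, \Phi)$ is genuinely a symmetric $\Lambda$-Poincar\'e complex.

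The chain-equivalence claim for $\alpha$ I would dispatch first. The mapping cylinder structure of $N$ makes $r\colon N \to X$ a homotopy equivalence; since $X$ is path-connected, $r$ induces an isomorphism on $\pi_1$ and so lifts to a $\pi_1(X)$-equivariant homotopy equivalence $\widetilde r\colon \widetilde N \to \widetilde X$. Its cellular-chain and compactly supported cochain versions supply $\alpha_\#$ and $\alpha^\#$, and both are chain equivalences by the equivariant Whitehead theorem. Commutativity of the square then follows from naturality of cap products combined with the chain-level identity $r_\#[\Sigma] \simeq [X]$ up to a boundary. This last identity is built into the construction $[\Sigma] = [N]\cap[\mathcal U]$: the Thom class is normalized so that push-forward through $r$ recovers the fundamental class of $X$, so the verification reduces to unwinding the diagonal approximation used in the preceding lemma.

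The main obstacle is showing that $\Phi_0$ is a chain equivalence over $\Lambda$. The natural factorization in diagram~\eqref{D:1} presents $\bullet\cap[\Sigma]$ as the composite $(\bullet \cap [\mathcal U]) \circ (\bullet \cap [N])$. The second factor is Poincar\'e--Lefschetz duality for the compact smooth manifold with boundary $(N,\partial N)$, a $\Lambda$-chain equivalence by classical manifold theory. The first factor is a Thom-type isomorphism, and here the difficulty flagged in the introduction enters: Browder's theorem supplies the Thom class and a Thom isomorphism only with $\mathbb Z$-coefficients. To promote it to $\Lambda$, I would exploit the full content of Browder's theorem, namely that $r\colon \partial N \to X$ is fiber homotopy equivalent to a genuine spherical fibration. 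Pulling this fibration back along $\widetilde X \to X$ and applying the local-coefficient Thom isomorphism (via the Serre spectral sequence of the pulled-back bundle, or equivalently a twisted Leray--Hirsch argument) produces the required $\Lambda$-chain equivalence; since both factors live on finitely generated free $\Lambda$-complexes, their composite is a chain equivalence by the algebraic Whitehead theorem.

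Uniqueness up to equivalence I would handle at the end: given two mapping cylinder neighborhoods $N,N'$ of $X$ in possibly different Euclidean spaces, one embeds both in a common $\mathbb R^M$ and uses that the Spivak fibration is unique up to fiber homotopy equivalence~\cite{Browd72}; this forces the associated Thom classes to agree up to coboundary and produces a $\Lambda$-chain equivalence between $(C_\#(N,\Lambda),[\Sigma])$ and $(C_\#(N',\Lambda),[\Sigma'])$ compatible up to chain homotopy with the maps $\alpha,\alpha'$.
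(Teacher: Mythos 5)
Your proposal reaches the stated (non‑simple) conclusion by a genuinely different route from the paper's. The paper does not prove Lemma~\ref{L:2} by promoting the Thom isomorphism to $\Lambda$-coefficients; on the contrary, Remark~\ref{Rem:SS3.2} is devoted to explaining that \cite[Theorem~A]{Browd72}, as proved, only yields that $\bullet\cap[\mathcal U]$ is a $\mathbb Z$-chain equivalence, since the Thom class of $\nu_X$ is obtained by restriction from a simply connected Poincar\'e space $W^{n+1}$ containing $X\times I$. The actual proof is deferred to Section~\ref{SS:Class}: one first equips $X$ with a simple $\Lambda$-PD type by approximating it by a $2$-patch space $X_\varepsilon=W\cup_{\mathcal S}V$ (Lemma~\ref{L:3.8}, from \cite[Lemma~7.2]{BFMW07}), shows via Mayer--Vietoris, \cite[Theorem~1$'$]{Chapman80} and \cite[Proposition~2.7]{Wall70} that $X_\varepsilon$ is a simple $\Lambda$-PD complex, and then transports this structure through the simple equivalences $X_\varepsilon\to X\hookrightarrow N$ to conclude that $\bullet\cap[\Sigma]$ is a (simple) chain equivalence. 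Your route --- Lefschetz duality for $(N,\partial N)$ composed with a local-coefficient Thom isomorphism for the Spivak fibration pulled back over $\widetilde X$ --- is essentially \cite[Proposition~3.10]{1980}, which the paper records in its Appendix and which does suffice for the literal statement here (a symmetric, not necessarily simple, $\Lambda$-PD structure); your uniqueness argument via uniqueness of the Spivak fibration coincides with the paper's. Two caveats. First, be explicit that the ``twisted Leray--Hirsch'' step is Ranicki's theorem and not part of ``the full content of Browder's theorem'': as written, that sentence glosses over exactly the point Remark~\ref{Rem:SS3.2} warns about. Second, your route cannot be upgraded to give what the paper ultimately needs: the spectral-sequence argument gives no control on the Whitehead torsion of the resulting equivalence, whereas the $2$-patch construction yields simpleness (and $\varepsilon$-control), which is why the authors call the Appendix lemma ``not helpful'' for the rest of the paper and take the longer road.
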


A proof of 
 Lemma~\ref{L:2} will be given in Section~\ref{SS:Class}.

\begin{definition}
Two (simple) $\Lambda$-PD structures $(D_\#, \Phi)$, $(D_\#', \Phi')$ on $X$ are said to be equivalent if there is a chain equivalence $\gamma: D_\# \rightarrow D_\#'$ such that 


\begin{center}
\begin{tikzpicture}[node distance=2.5cm, auto]
  \node (D) {$D^\#$};
  \node (Dn) [below of=L] {$D_{n-\#}$};
  \node (Sc) [right of=L] {$S^\#_c$};
  \node (Sn) [right of=Dn] {$S_{n-\#} (\widetilde{X})$};
  \node (D') [right of=Sc] {$D'^\#$};
  \node (Dn') [right of=Sn] {$D'_{n-\#} $};
  \draw[->, font=\small, above] (Sc) to node {$\alpha^\#$} (D);
  \draw[->, font=\small] (D) to node {$\Phi_0$} (Dn);
  \draw[->, font=\small] (Dn) to node {$\alpha_\#$} (Sn);
  \draw[->, font=\small] (Sc) to node {$\cap [X]$} (Sn);
  \draw[->, font=\small] (Dn') to node [swap] {$\alpha'_\#$} (Sn);
  \draw[->, font=\small] (Sc) to node {$\alpha'^\#$} (D');
  \draw[->, font=\small] (D') to node {$\Phi_0'$} (Dn');
  \draw[->, bend right] (D') to node [swap] {$\gamma^\#$} (D);
  \draw[->, bend right] (Dn) to node {$\gamma$} (Dn');
\end{tikzpicture}
\end{center} 

\noindent
commutes, and $\gamma$ respects $\Phi$ and $\Phi'$ (for details see~\cite{Ran92}).

\end{definition}

Uniqueness of $(C_\# (N, \Lambda), [\Sigma])$ is due to the stability of the homotopy equivalence 
$$(N, \partial N) \sim (E \nu_X, \partial E \nu_X),$$ 
where $E \nu_X$ is the mapping cylinder of $\partial E \nu_X \rightarrow X$.

The question if $(C_\# (N, \Lambda), [\Sigma])$ is a simple $\Lambda$-PD structure reduces to whether 
$$\bullet \cap [\mathcal{U}]: C_\# (N, \partial N, \Lambda) \rightarrow C_{\# - (m-n)} (N, \Lambda)$$ 
is a simple $\Lambda$-chain equivalence. This will follow from an alternative approach presented in the following subsections.

\begin{remark}[\hbox{\rm Comments on~\cite[Theorem~A]{Browd72}}]\label{Rem:SS3.2}

In Diagram~(\ref{D:1}), the maps $r_\#$ and $r^\#$ are chain equivalences, and we want that $\bullet \cap [\Sigma]$ is also a chain equivalence. However, this cannot be deduced from  \cite[Theorem~A]{Browd72}. The strategy of the proof is to embed $X$ into a simply-connected Poincar\'{e} space $W^{n+1}$ of dimension $n+1$ (more precisely, $X\times I$ is embeded in $W^{n+1}$), and then apply Spivak's result to $W$, to get the fibration $\nu_W$. Then restricting it to $X$, one gets the Spivak fibration over $X$. Hence the Thom class of $\nu_X$ is the restriction of the Thom class of $\nu_W$, so the cap product with it inherits only a $\mathbb{Z}$-chain equivalence, and not a $\Lambda$-equivalence.
\end{remark}

\subsection{Simple $\Lambda$-PD  structures on $X$}\label{SS:Simple}

The construction here uses ideas and results from~\cite{BFMW96, BFMW07}. More specifically, \cite[Lemma~7.2]{BFMW07} contains the following fact:

\begin{lemma}\label{L:3.8}
Given $X$ as above with ${\hbox{\rm dim}} X = n \geq 6$, and given $\varepsilon > 0$, there is a space $X_\varepsilon$ and an $\varepsilon$-homotopy equivalence $X_\varepsilon \rightarrow X$ over $X$, where $X_\varepsilon = W \underset{\mathcal{S}}{\cup} V$ is patched together from $n$-manifolds with boundary $\partial W$ and $\partial V$ along an $\varepsilon$-homotopy equivalence $\mathcal{S}: \partial W \rightarrow \partial V$ over $X$
(see\cite{BFMW07}).
\end{lemma}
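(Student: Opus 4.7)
The statement asserts that any compact ENR generalized $n$-manifold ($n \geq 6$) can be $\varepsilon$-approximated by a 2-patch space, and I would prove it along the lines of the controlled-surgery construction of~\cite{BFMW07}. The natural starting point is the mapping cylinder neighborhood $N$ of an embedding $X \hookrightarrow \mathbb{R}^m$, whose retraction $r : \partial N \to X$ is homotopy equivalent to the Spivak fibration $\partial E\nu_X \to X$ by~\cite[Theorem~A]{Browd72}. This gives $X$ a geometric controlled structure that is close to being manifold-like, and it is over this $r$ that the notion of ``control over $X$'' will be taken throughout.

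Choose a control parameter $\delta \ll \varepsilon$, whose precise size is dictated by the Stability Theorem of controlled surgery in the sense of~\cite{Fer94, Qui83}. Fix a $\delta$-fine handle (or simplicial) decomposition of $N$, so that each handle projects through $r$ to a set of diameter $< \delta$ in $X$. Split the handles into a low-index group $N_W$ and a high-index group $N_V$, meeting along a middle-dimensional ``skeleton-neighborhood''~$\Sigma$. Taking cores and appropriate dual skeleta and then thickening using the Spivak fibration data of $\nu_X$ (to return from the ambient dimension $m$ to the intrinsic dimension $n$) produces a decomposition $X = A \cup B$ into two $\delta$-controlled $\Lambda$-PD pieces, joined along a common $\delta$-Poincaré boundary $A \cap B$.

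Now invoke the controlled surgery exact sequence of~\cite{BFMW96} on each piece separately. Each pair $(A, \partial A)$ and $(B, \partial B)$ carries canonical normal data pulled back from $\nu_X$, so it defines a $\delta$-controlled surgery problem with obstruction in the controlled $L$-group $H_n(X;\mathbb{L})$. The crucial point is that these two obstructions \emph{add} to the total $\mathbb{L}$-theoretic index class of $X$, which for a generalized manifold is realized by the Quinn invariant $i(X)$; consequently one can arrange the handle splitting (if necessary, by modifying the interior of each piece through $\mathbb{L}_q$-surgery) so that both obstructions vanish. The controlled surgery theorem then produces $n$-manifolds with boundary $W, V$ and $\varepsilon$-controlled equivalences $W \to A$, $V \to B$ over $X$, whose boundary restrictions give $\varepsilon$-equivalences $\partial W \to A \cap B \leftarrow \partial V$. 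Composing one of these with a controlled homotopy inverse of the other yields $\mathcal{S} : \partial W \to \partial V$, and the resulting pushout $X_\varepsilon := W \cup_{\mathcal{S}} V$ comes with an induced $\varepsilon$-homotopy equivalence to $X = A \cup_{A\cap B} B$ over $X$.

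The principal obstacle is ensuring that the two surgery obstructions can be made to cancel \emph{simultaneously} at the level of $\delta$-control; this requires choosing the handle splitting so that the local index classes along the two halves sum to the global Quinn invariant of $X$, and is the technical core of~\cite[\S 7]{BFMW07}. The hypothesis $n \geq 6$ is used here exactly to apply the controlled $\pi$-$\pi$ theorem and the Stability Theorem in the interior of each piece, without which one could not upgrade the abstract vanishing of the obstructions to a geometric $\varepsilon$-homotopy equivalence between $X_\varepsilon$ and $X$.
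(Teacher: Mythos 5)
The paper does not actually prove this lemma: it is quoted directly from~\cite[Lemma~7.2]{BFMW07}, so your sketch has to be measured against the Bryant--Ferry--Mio--Weinberger argument. Your overall architecture --- mapping cylinder neighborhood, a decomposition $X=A\cup B$ into two controlled Poincar\'e pieces meeting along a common $(n-1)$-dimensional piece, controlled surgery on each half, and gluing the resulting manifolds along the induced controlled equivalence of boundaries --- is the right one. But the step where you claim that the two halves carry controlled surgery obstructions which ``add to the total $\mathbb{L}$-theoretic index class of $X$'' and which you then ``arrange to vanish'' by adjusting the splitting is not correct, and it fails precisely in the interesting case. Quinn's local index $i(X)$ is constant on a connected generalized manifold, so if $i(X)\neq 1$ the index component of a controlled surgery obstruction over any codimension-zero piece of $X$ is forced to be $(i(X)-1)/8\neq 0$ there; two obstructions constrained to sum to a nonzero total cannot both be made to vanish. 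Yet the lemma holds for all generalized manifolds, resolvable or not, so obstruction cancellation cannot be the mechanism.

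The correct mechanism is the one you only mention in passing at the end: the surgery problems on $(A,A\cap B)$ and $(B,A\cap B)$ have \emph{free} boundary and satisfy the controlled $\pi$--$\pi$ condition (this is why the paper insists that $\pi_1(\partial W)\cong\pi_1(W)$ and $\pi_1(\partial V)\cong\pi_1(V)$), so by the controlled $\pi$--$\pi$ theorem of Ferry--Pedersen there is \emph{no} obstruction at all to obtaining controlled homotopy equivalences of pairs $(W,\partial W)\to(A,A\cap B)$ and $(V,\partial V)\to(B,A\cap B)$; this is what produces the patches. The index $i(X)$ does not get cancelled --- it reappears as the controlled surgery obstruction to deforming the gluing map $\mathcal{S}\colon\partial W\to\partial V$ to a homeomorphism, which is exactly why $X_\varepsilon$ is in general only a $2$-patch space rather than a manifold, and why the paper later remarks that these approximating spaces need not themselves be generalized manifolds. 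A secondary gap: producing the decomposition $X=A\cup B$ so that $A\cap B$ is an $(n-1)$-dimensional $\delta$-controlled Poincar\'e space over $X$ with the controlled $\pi$--$\pi$ property is itself nontrivial for a space that is neither a manifold nor a complex; ``taking cores and dual skeleta and thickening with the Spivak data'' does not yet deliver this, and it is where a substantial part of the technical work in~\cite{BFMW07} actually lies.
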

It is of course, required that $\pi_1 (\partial W) \cong \pi_1 (W)$ and $\pi_1 (\partial V) \cong \pi_1 (V)$. We may assume that $W$, $V$ are simplicial complexes and $\mathcal{S}: \partial W \rightarrow \partial V$ is simplicial, hence $X_\varepsilon$ is a complex. It is proved that $X_\varepsilon$ is an $\varepsilon$-controlled PD
complex. We are only interested if $X_\varepsilon$ is a simple $\Lambda$-PD
complex, which follows from the standard Mayer-Vietors argument and a vanishing Whitehead torsion. For convenience we write down the relevant diagrams: 


\begin{center}
\begin{tikzpicture}[node distance=2.5cm, auto]
  \node (dots_ul) {$\dots$};
  \node (Hq-1) [right of=dots_ul] {$H^{q-1} (W \cap V, \Lambda) $};
  \node (Hq) [node distance=5cm, right of=Hq-1] {$H^q (X_\varepsilon, W \cap V, \Lambda)$};
  \node (HqX) [node distance=4cm, right of=Hq] {$ H^q (X_\varepsilon, \Lambda)$};
  \node (dots_ur) [right of=HqX] {$\dots$};
      \node (cong) [node distance=1cm, below of=Hq] { };
  \node (HqW) [node distance=1cm, below of=cong] {$H^q (W, \partial W, \Lambda) \oplus H^q (V, \partial V, \Lambda)$};
  \node (dots_dl) [node distance=4cm, below of=dots_ul] {$\dots$};
  \node (Hn-q) [node distance=4cm, below of=Hq-1] {$H_{n-q} (W \cap V, \Lambda)$};
  \node (Hn-qW) [node distance=2cm, below of=HqW] {$ H_{n-q} (W, \Lambda) \oplus H_{n-q} (V, \Lambda)$};
  \node (Hn-qX) [node distance=4cm, below of=HqX] {$H_{n-q} (X_\varepsilon, \Lambda)$};
  \node (dots_dr) [node distance=4cm, below of=dots_ur] {$\dots$};
  \draw[->] (dots_ul) to node {} (Hq-1);
  \draw[->] (Hq-1) to node {} (Hq);
       \draw[->] (Hq) to node {$\cong$} (HqW);
  \draw[->] (Hq) to node {} (HqX);
  \draw[->] (HqX) to node {} (dots_ur);
  \draw[->] (dots_dl) to node {} (Hn-q);
  \draw[->] (Hn-q) to node {} (Hn-qW);
  \draw[->] (Hn-qW) to node {} (Hn-qX);
  \draw[->] (Hn-qX) to node {} (dots_dr);
  \draw[->] (Hq-1) to node {$\cong$} (Hn-q);
        \draw[->] (HqX) to node {$\cong$} (Hn-qX);
  \draw[->] (HqW) to node {$\cong$} (Hn-qW);
\end{tikzpicture}
\end{center} 

Here, we consider $W, V \subset X_\varepsilon$. The left vertical isomorphism fits into the diagram of $\Lambda$-PD
isomorphisms


\begin{center}
\begin{tikzpicture}[node distance=2.5cm, auto]
  \node (Hq-1) {$H^{q-1} (\partial W, \Lambda)$};
  \node (Hn-q) [below of=Hq-1] {$H_{n-q} (\partial W, \Lambda)$};
  \node (Hq-1W) [node distance=4cm, right of=Hq-1] {$H^{q-1} (W \cap V, \Lambda)$};
  \node (Hq-1V) [node distance=4cm, right of=Hq-1W] {$H^{q-1} (\partial V, \Lambda)$};
  \node (Hn-qW) [below of=Hq-1W] {$H_{n-q} (W \cap V, \Lambda)$};
  \node (Hn-qV) [below of=Hq-1V] {$H_{n-q} (\partial V, \Lambda)$};
  \draw[->, font=\small] (Hq-1W) to node [swap] {$\cong$} (Hq-1);
  \draw[->, font=\small] (Hq-1W) to node {$\cong$} (Hq-1V);
  \draw[->, font=\small] (Hn-q) to node {$\cong$} (Hn-qW);
  \draw[->, font=\small] (Hn-qV) to node [swap] {$\cong$} (Hn-qW);
  \draw[->, font=\small] (Hq-1) to node {$\cong$} (Hn-q);
  \draw[->, font=\small] (Hq-1W) to node {} (Hn-qW);
  \draw[->, font=\small] (Hq-1V) to node {$\cong$} (Hn-qV);
  \draw[->, bend right] (Hq-1V) to node [swap] {$\mathcal{S}^*$} (Hq-1);
  \draw[->, bend right] (Hn-q) to node {$\mathcal{S}_*$} (Hn-qV);
\end{tikzpicture}
\end{center} 


\begin{center}
\begin{tikzpicture}[node distance=2.5cm, auto]
  \node (Hq-1) {$C^{\#} (\partial W, \Lambda)$};
  \node (Hn-q) [below of=Hq-1] {$C_{n-1-\#} (\partial W, \Lambda)$};
  \node (Hq-1W) [node distance=4cm, right of=Hq-1] {$C^{\#} (W \cap V, \Lambda)$};
  \node (Hq-1V) [node distance=4cm, right of=Hq-1W] {$C^{\#} (\partial V, \Lambda)$};
  \node (Hn-qW) [below of=Hq-1W] {$C_{n-1-\#} (W \cap V, \Lambda)$};
  \node (Hn-qV) [below of=Hq-1V] {$C_{n-1-\#} (\partial V, \Lambda)$};
  \draw[->, font=\small] (Hq-1W) to node {} (Hq-1);
  \draw[->, font=\small] (Hq-1W) to node [swap] {} (Hq-1V);
  \draw[->, font=\small] (Hn-q) to node [swap] {} (Hn-qW);
  \draw[->, font=\small] (Hn-qV) to node {} (Hn-qW);
  \draw[->, font=\small] (Hq-1) to node [swap] {} (Hn-q);
  \draw[->, font=\small] (Hq-1W) to node {} (Hn-qW);
  \draw[->, font=\small] (Hq-1V) to node {} (Hn-qV);
  \draw[->, bend right] (Hq-1V) to node [swap] {$\mathcal{S}^\#$} (Hq-1);
  \draw[->, bend right] (Hn-q) to node {$\mathcal{S}_\#$} (Hn-qV);
\end{tikzpicture}
\end{center} 

\noindent
defining two simple $\Lambda$-PD structures on $W \cap V$. 

Now, $\mathcal{S}$ is an $\varepsilon$-equivalence, hence it follows by~\cite[Theorem~1']{Chapman80} that $\mathcal{S}_\#$, and $\mathcal{S}^\#$ are simple chain equivalences for sufficiently small $\varepsilon > 0$. It then follows by~\cite[Proposition~2.7]{Wall70}, that $X_\varepsilon$ is a simple $\Lambda$-PD complex. The induced chain equivalence of $X_\varepsilon \rightarrow X$ determines a simple $\Lambda$-PD
structure on $X$:


\begin{center}
\begin{tikzpicture}[node distance=3cm, auto]
  \node (L) {$S^\#_c (\widetilde{X})$};
  \node (K) [below of=L] {$S_{n-\#} (\widetilde{X})$};
  \node (Y) [right of=L] {$C^\# (X_\varepsilon, \Lambda)$};
  \node (X) [right of=K] {$C_{n-\#} (X_\varepsilon, \Lambda)$};
  \draw[->, font=\small] (L) to node {} (Y);
  \draw[->, font=\small] (L) to node {$\bullet \cap [X]$} (K);
  \draw[->, font=\small] (X) to node [swap] {} (K);
  \draw[->, font=\small] (Y) to node {$\bullet \cap [X_\varepsilon]$} (X);
\end{tikzpicture}
\end{center}

\subsection{The simple $\Lambda$-Poincar\'{e} duality type}\label{SS:SimpleP}

The simple $\Lambda$-PD structure on a generalized manifold $X$, given by $X_\varepsilon \rightarrow X$, can be improved by requiring that it is a simple homotopy equivalence. However, this requires the extension of the Whitehead torsion theory to ENR spaces as done in~\cite{Chapman77}:
To any homotopy equivalence $f: X \rightarrow Y$ between ANR's one can assign an element $\overline{\tau} (f) \in Wh (\pi_1 (M))$ such that:

\begin{enumerate}
	\item[(i)] If $X$, $Y$ are finite CW complexes then $\overline{\tau} (f) = \tau (f)$, where $\tau$ denotes the classical torsion.
	\item[(ii)] $\overline{\tau} (f) = 0$ if and only if there exists an ANR $Z$ and cell-like maps $Z \xrightarrow{\alpha} X$, $Z \xrightarrow{\beta} Y$ such that 

\begin{center}
\begin{tikzpicture}[node distance=2.5cm, auto]
  \node (LU) {};
  \node (X) [below of=L] {$X$};
  \node (Z) [right of=L] {$Z$};
  \node (RU) [right of=Z] {};
  \node (Y) [below of=RU] {$Y$};
  \draw[->, font=\small] (Z) to node [swap] {$\alpha$} (X);
  \draw[->, font=\small] (X) to node {f} (Y);
  \draw[->, font=\small] (Z) to node {$\beta$} (Y);
\end{tikzpicture}
\end{center}

commutes up to homotopy. Any ANR $X$ has a simple homotopy type given by $Id: X \rightarrow X$.

\end{enumerate}

Applying this to our situations we obtain that $X_\varepsilon \rightarrow X$ is a simple homotopy equivalence because it is an $\varepsilon$-homotopy equivalence, assuming that $\varepsilon >0$ is sufficiently small~\cite[Theorem~1']{Chapman80}.

If $N$ is the mapping cylinder neighborhood of $r: \partial N \rightarrow X$, then $r: N \rightarrow X$ is a simple homotopy equivalence with inverse the inclusion $i: X \hookrightarrow N$. One has 
$$\overline{\tau} (X_\varepsilon \rightarrow X \xhookrightarrow{i} N) = \overline{\tau} (X_\varepsilon \rightarrow X) + \overline{\tau} (i) = 0.$$

\begin{corollary}
The map 
$$\bullet \cap [\Sigma]: C^\# (N, \Lambda) \rightarrow C_{n-\#} (N, \Lambda)$$ 
is a simple chain equivalence.
\end{corollary}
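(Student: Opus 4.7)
The plan is to transfer the simple $\Lambda$-PD structure on $X_\varepsilon$ built in Section~\ref{SS:Simple} along a chain of simple homotopy equivalences to $N$ and then close the argument by naturality of the cap product. First I would collect the three simple homotopy equivalences already at hand: the $\varepsilon$-equivalence $X_\varepsilon \to X$ (which is simple by Chapman's theorem~\cite[Theorem~1']{Chapman80} for $\varepsilon$ sufficiently small), the inclusion $i:X \hookrightarrow N$ (which satisfies $\bar\tau(i)=0$ by the torsion identity displayed just above the corollary), and the retraction $r:N \to X$. The composite $f = i\circ (X_\varepsilon \to X) : X_\varepsilon \to N$ is then a simple homotopy equivalence, so the induced chain maps $f^\#:C^\#(N,\Lambda) \to C^\#(X_\varepsilon,\Lambda)$ and $f_\#:C_\#(X_\varepsilon,\Lambda) \to C_\#(N,\Lambda)$ are simple $\Lambda$-chain equivalences.

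Next I would invoke the naturality identity $f_\#\bigl(f^\#(\beta)\cap [X_\varepsilon]\bigr) = \beta \cap f_\ast[X_\varepsilon]$. Since $f_\ast[X_\varepsilon]$ and $[\Sigma]$ represent the same class in $H_n(N,\mathbb{Z})$, namely the image under $i_\ast$ of the fundamental class of $X$, the two cycles differ by a boundary, so
$$\bullet \cap [\Sigma] \;\simeq\; f_\# \circ (\bullet \cap [X_\varepsilon]) \circ f^\# \;:\; C^\#(N,\Lambda) \longrightarrow C_{n-\#}(N,\Lambda)$$
as $\Lambda$-chain maps, up to $\Lambda$-chain homotopy. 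By Section~\ref{SS:Simple} the middle map $\bullet \cap [X_\varepsilon]$ is a simple chain equivalence, and $f^\#$, $f_\#$ are simple by the previous paragraph; additivity of Whitehead torsion along composites of chain equivalences (plus its invariance under chain homotopy) then forces $\bar\tau(\bullet \cap [\Sigma])=0$, which is exactly the assertion of the corollary. In particular, $\bullet \cap [\Sigma]$ is automatically a $\Lambda$-chain equivalence, strengthening the $\mathbb{Z}$-chain equivalence coming from Browder's theorem discussed in Remark~\ref{Rem:SS3.2}.

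The main obstacle I anticipate is the bookkeeping needed to pass the naturality of the cap product from the homology level down to the chain level up to chain homotopy, and to verify that the identifications of fundamental cycles and of the Thom class used to define $[\Sigma]=[\mathcal{U}]\cap[N]$ are compatible with the simple homotopy equivalence $f:X_\varepsilon \to N$. Phrasing everything in Ranicki's framework of symmetric algebraic Poincar\'e complexes~\cite{Ran92} makes this routine: once the three constituent simple equivalences are pinned down, the transport of the simple symmetric structure from $C_\#(X_\varepsilon,\Lambda)$ to $C_\#(N,\Lambda)$ is formal, and the corollary follows without any further computation.
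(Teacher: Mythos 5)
Your argument is correct and is essentially the paper's own proof: the paper simply displays the commutative square relating $\bullet\cap[\Sigma]$ on $C^\#(N,\Lambda)$ to $\bullet\cap[X_\varepsilon]$ on $C^\#(X_\varepsilon,\Lambda)$ via the simple equivalence $X_\varepsilon\to X\hookrightarrow N$ established just above the corollary, and your write-up merely makes explicit the naturality of the cap product and the additivity of torsion that the paper leaves implicit.
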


\begin{proof}
This follows from the diagram


\begin{center}
\begin{tikzpicture}[node distance=2.5cm, auto]
  \node (L) {$C^{\#} (N, \Lambda)$};
  \node (K) [below of=L] {$C^{\#} (X_\varepsilon, \Lambda)$};
  \node (Y) [node distance=4cm, right of=L] {$C_{n-\#} (N, \Lambda)$};
  \node (X) [node distance=4cm, right of=K] {$C_{n-\#} (X_\varepsilon)$};
  \draw[->, font=\small] (L) to node {$\bullet \cap [\Sigma]$} (Y);
  \draw[->, font=\small] (K) to node {} (L);
  \draw[->, font=\small] (K) to node {$\bullet \cap [X_\varepsilon]$} (X);
  \draw[->, font=\small] (Y) to node {} (X);
\end{tikzpicture}
\end{center}
\end{proof}

\begin{definition}
A {\it simple $\Lambda$-PD type} on $X$ is defined as a simple symmetric $\Lambda$-PD
structure $(D_\#, \Phi)$


\begin{center}
\begin{tikzpicture}[node distance=2.5cm, auto]
  \node (L) {$S^{\#}_c (\widetilde{X})$};
  \node (K) [below of=L] {$S_{n-\#} (\widetilde{X})$};
  \node (Y) [node distance=4cm, right of=L] {$D^\#$};
  \node (X) [node distance=4cm, right of=K] {$D_{n-\#}$};
  \draw[->, font=\small] (L) to node {$\alpha^\#$} (Y);
  \draw[->, font=\small] (L) to node {$\bullet \cap [X]$} (K);
  \draw[->, font=\small] (X) to node [swap] {$\alpha_\#$} (K);
  \draw[->, font=\small] (Y) to node {$\Phi_0$} (X);
\end{tikzpicture}
\end{center}

\noindent
with $\alpha$ a simple chain equivalence. Two types are said to be {\it equivalent} if the chain equivalence $ D_{\#} \xrightarrow[]{\gamma} D_{\#}'$ is simple. 
\end{definition}
In this sense we have the following:

\begin{summary}
Any compact oriented generalized $n$-manifold $X$ has a simple  $\Lambda$-PD
type, unique up to equivalence determined by a mapping cylinder neighborhood $N$ of $X \subset \mathbb{R}^m$, where $n \geq 6$ and $m$ is sufficiently large.
\end{summary}

\begin{remark}\label{R:3.12}
Strictly speaking, it does not make sense to state that $\alpha: D_\# \rightarrow S_\# (\widetilde{X})$ is a simple chain equivalence.
However,
 the extension of Whitehead torsion to compact ANR spaces is built on the fact that there is a cell-like map $K \times Q \rightarrow X$, where $Q$ is a Hilbert cube manifold and $K$ is a finite complex. Then ``simple'' refers to the finite complex $K$. 
\end{remark}

\section{Recognizing generalized manifolds among ENR spaces with simple $\Lambda$-PD
type}\label{S:Recognizing}

\subsection{Controlled Poincar\'{e} duality complexes and approximate fibrations}\label{SS:Controlled}

Controlled Poincar\'{e} duality complexes are a bridge between  simple $\Lambda$-PD complexes and generalized manifolds. There is a controlled geometric aspect linked to~\cite[Proposition~4.5]{BFMW96} (see also~\cite[Example~2.3]{Qui83}). The geometric aspect leads to approximate fibrations introduced in~\cite{CorDuv77}.

\begin{definition}
An \textit{oriented $n$-dimensional} $\varepsilon$-PD
\textit{complex} $K$ (over $K$) is a finite complex $K$, an $n$-cycle $[K] \in C_n (K)$ such that cap product with it 
$$\bullet \cap [K]: C^\# (K, \Lambda) \rightarrow C_{n-\#} (K, \Lambda)$$
is an $\varepsilon$-chain equivalence over $K$.
\end{definition}
To define $\varepsilon$-chain maps, resp. $\varepsilon$-chain equivalences, one needs the notion of geometric chain complexes. We refer to the literature, in particular to~\cite{Yam87} and~\cite[Remark~2 on p.120]{Yam98} and~\cite[Definition~2.1]{Fer94}.
One observes that the $\varepsilon$-Poincar\'{e}-duality is a much stronger condition than the simple Poincar\'{e} duality, assuming that $\varepsilon$ is sufficiently small (again~\cite[Theorem~1']{Chapman80}).

Let $N \subset \mathbb{R}^m$ be a regular neighborhood of $K \subset \mathbb{R}^m$, $m$ sufficiently large, $r: N \rightarrow K$ the restriction. Then by~\cite[Proposition~4.5]{BFMW96}, the restrictions of $r$ to $\partial N$ (also denoted by $r$), $r: \partial N \rightarrow K$ , has the $\delta$-controlled homotopy lifting property. Here $\varepsilon$ is sufficiently small
 and $\delta$ depends on $\varepsilon$ and $K$.

\begin{definition}
The \textit{$\delta$-controlled homotopy lifting property} is defined as follows:
Given a separable metric space $Z$, and a commutative diagram


\begin{center}
\begin{tikzpicture}[node distance=2.5cm, auto]
  \node (L) {$Z \times \{0\}$};
  \node (K) [below of=L] {$Z \times I$};
  \node (Y) [node distance=3cm, right of=L] {$\partial N$};
  \node (X) [node distance=3cm, right of=K] {$K$};
  \draw[->, font=\small] (L) to node {$h_0$} (Y);
  \draw[->, font=\small] (L) to node {$i$} (K);
  \draw[->, font=\small] (K) to node {$h_t$} (X);
  \draw[->, font=\small] (Y) to node {$r$} (X);
  \draw[->, font=\small] (K) to node {$H$} (Y);
\end{tikzpicture}
\end{center}
\noindent
there is a homotopy $H: Z \times I \rightarrow \partial N$, such that $\text{dist} (r \circ H_t, h_t) < \delta$.
(Note the relationship with the ${UV}^{k}(\varepsilon)$-property in Definition~\ref{mapping}. For details we refer to~\cite{BrFeMi96}, in particular Theorem 6 therein.)
\end{definition}

We emphasize
 that this holds for some $\delta = T \cdot \varepsilon$, where $T > 0$ is a factor depending on $K$~\cite[Proposition~4.5]{BFMW96}.
However, if this holds for all $\delta > 0$ then one obtains approximate fibrations, more precisely (see~\cite{CorDuv77}):
\begin{definition}
An \textit{approximate fibration} $p: E \rightarrow B$ is a surjective map between locally compact separable metric ANR's $E$ and $B$ such that the above $\delta$-homotopy lifting property holds for all separable metric spaces $Z$ and  all $\delta > 0$.\end{definition}
It is remarkable and useful to note that it is sufficient to require the lifting property only for cells~\cite[Theorem~2.2]{CorDuv77}. 
The last building stone of the bridge is the following:

\begin{theorem}[\cite{DavHus84}]\label{T:4.4}
Suppose that $p: M \rightarrow B$ is a proper map, where $M$ is a closed connected manifold and $B$ is an ANR. Then, if $p$ is an approximate fibration, $B$ is a generalized manifold.
\end{theorem}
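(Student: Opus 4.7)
The plan is to verify the two defining conditions of a generalized $n$-manifold for $B$: that $B$ is a finite-dimensional separable metric ENR, and that the local homology $H_\ast(B, B\setminus\{x\}; \mathbb{Z})$ is isomorphic to $H_\ast(\mathbb{R}^n, \mathbb{R}^n \setminus \{0\}; \mathbb{Z})$ for every $x \in B$, where $n = \dim B$. The ENR structure comes comparatively cheaply: $B$ is compact as the continuous image of the closed manifold $M$, separable and metrizable by the ANR hypothesis in this paper's conventions, and finite-dimensional because a proper surjection from an $m$-manifold onto a metric ANR forces $\dim B \leq m$. A finite-dimensional separable metric ANR is automatically an ENR.

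For the local homology, fix $x \in B$ and set $F := p^{-1}(x)$, a compact subset of the oriented $m$-manifold $M$. I would choose a decreasing neighborhood basis $\{U_k\}$ of $x$ consisting of open contractible sets, available since $B$ is a locally contractible ANR. Applying the $\delta$-controlled homotopy lifting property with $\delta = 1/k$ to the composition $Z \times I \to B$ obtained from $p$ and a contraction of $U_k$ to $\{x\}$, for $Z = p^{-1}(U_k)$ and $h_0 = \mathrm{id}$, yields homotopies of $p^{-1}(U_k)$ ending in arbitrarily small $p$-neighborhoods of $F$. Passing to a cofinal subsequence, $F$ is thereby exhibited as a shape-theoretic deformation retract of $p^{-1}(U_k)$, so $\check{H}_\ast(p^{-1}(U_k)) \cong \check{H}_\ast(F)$. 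The same argument applied to $p|_{p^{-1}(U_k\setminus\{x\})}$, which is itself an approximate fibration over $U_k \setminus \{x\}$, controls $\check{H}_\ast(p^{-1}(U_k \setminus \{x\}))$ in terms of the common fiber shape.

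Next, since $F$ is compact in an open subset of the oriented $m$-manifold $M$, Alexander--Lefschetz duality produces
\[
H_\ast(p^{-1}(U_k), \; p^{-1}(U_k) \setminus F; \mathbb{Z}) \;\cong\; \check{H}^{m-\ast}(F; \mathbb{Z}),
\]
a quantity independent of $k$. Combining this with the standard identification
\[
H_\ast(B, B \setminus \{x\}) \;\cong\; \varinjlim_k H_\ast(U_k, U_k \setminus \{x\})
\]
(valid since $B$ is an ANR), the problem reduces to showing that the map of pairs
\[
p \colon (p^{-1}(U_k), \; p^{-1}(U_k) \setminus F) \;\longrightarrow\; (U_k, \; U_k \setminus \{x\})
\]
induces an isomorphism on homology in the direct limit.

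This last reduction is where I expect the principal obstacle. The approximate (rather than genuine) fibration property must be leveraged through a Vietoris--Begle / Leray-type spectral sequence argument: the shape of $F$ serves as the ``fiber'' of $p$, and since the restrictions of $p$ above $\{x\}$ and above $U_k \setminus \{x\}$ share this fiber shape, one expects the spectral sequence to degenerate appropriately. Once the comparison isomorphism is in hand, the residual task is to identify $\check{H}^{m-\ast}(F; \mathbb{Z})$ with $H_\ast(\mathbb{R}^n, \mathbb{R}^n \setminus \{0\}; \mathbb{Z})$. This final identification follows once one knows that $F$ is a shape-theoretic Poincar\'{e} duality space of dimension $m - n$, a fact which is itself forced by Poincar\'{e} duality in $M$ together with the approximate fibration structure, and which pins down $n$ as the codimension of the fibers.
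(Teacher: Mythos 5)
The paper does not actually prove this statement: Theorem~\ref{T:4.4} is quoted verbatim from Daverman--Husch \cite{DavHus84} and used as a black box, so there is no internal proof to compare yours against. Judged on its own terms, your sketch has the right general shape (local homology via the lifting property, duality in $M$, a Leray/Wang-type comparison), but it contains a step that is not merely unproved but false.

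The problematic reduction is the claim that the map of pairs $p\colon (p^{-1}(U_k),\,p^{-1}(U_k)\setminus F)\to (U_k,\,U_k\setminus\{x\})$ should induce a homology isomorphism in the direct limit, together with the concluding identification $\check{H}^{m-\ast}(F;\mathbb{Z})\cong H_\ast(\mathbb{R}^n,\mathbb{R}^n\setminus\{0\})$. Test this on the Hopf fibration $p\colon S^3\to S^2$, which is a genuine (hence approximate) fibration from a closed $3$-manifold onto an ANR. There $F\cong S^1$, $p^{-1}(U)\cong U\times S^1$, so $H_\ast(p^{-1}(U),p^{-1}(U)\setminus F)\cong\check{H}^{3-\ast}(S^1)$ is $\mathbb{Z}$ in degrees $2$ \emph{and} $3$, whereas $H_\ast(U,U\setminus\{x\})$ is $\mathbb{Z}$ in degree $2$ only: the map of pairs is not an isomorphism, and $\check{H}^{m-\ast}(F)$ is not the local homology of $\mathbb{R}^n$. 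The fiber's homology must cancel out of the computation, and it does so only through the comparison of the (shape-theoretic) Leray/Wang sequences of $p$ over $U_k$ and over $U_k\setminus\{x\}\simeq S^{n-1}$ combined with duality in the open manifold $p^{-1}(U_k)$ --- this is exactly the content of the Daverman--Husch argument, and it is the step your sketch replaces with an expectation. Two smaller points: your justification of finite-dimensionality (``a proper surjection from an $m$-manifold onto a metric ANR forces $\dim B\leq m$'') is false as stated --- by Hahn--Mazurkiewicz there are surjections of $S^1$ onto $S^2$ --- so the approximate-fibration hypothesis must be used here too; and $M$ is not assumed orientable, so the Alexander--Lefschetz duality step needs local orientations or twisted coefficients.
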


Putting this together, one can phrase the recognition principle as follows: 
A $\Lambda$-PD
complex is a generalized manifold if it satisfies the $\varepsilon$-Poincar\'{e}
duality for all $\varepsilon > 0$.
However, generalized manifolds in general do not have CW or simplicial structures.
It is therefore appropriate to define a class of spaces to which the above theorems can be applied to characterize generalized manifolds in this class.
 This will be done in the next section.


\subsection{The class of simple $\Lambda$-PD
types and approximations by controlled PD
complexes}\label{SS:Class}
In this section we consider the following class $\mathcal{B}$ of $PD$ spaces $B$ characterized by
\begin{enumerate}
	\item[(i)] $B$ is a compact separable metric ENR which satisfies $\mathbb{Z}$-Poincar\'{e} duality of dimension $n$.
	\item[(ii)] $B$ has a simple $\Lambda$-PD  type, i.e. there is a diagram

\begin{center}
\begin{tikzpicture}[node distance=2.5cm, auto]
  \node (L) {$S^\#_c (\widetilde{B})$};
  \node (K) [below of=L] {$S_{n-\#} (\widetilde{B})$};
  \node (Y) [node distance=3.5cm, right of=L] {$D^\#$};
  \node (X) [node distance=3.5cm, right of=K] {$D_{n-\#}$};
  \draw[->, font=\small] (L) to node {$\alpha^\#$} (Y);
  \draw[->, font=\small] (L) to node {$\bullet \cap [B]$} (K);
  \draw[->, font=\small] (X) to node [swap] {$\alpha_\#$} (K);
  \draw[->, font=\small] (Y) to node {$\Phi_0$} (X);
\end{tikzpicture}
\end{center}
	where $(D_\#, \Phi)$ is a simple symmetric algebraic Poincar\'{e} duality chain complex and $\alpha$ is a simple chain equivalence $(\Lambda = \mathbb{Z}[\pi_1 B])$,
	see Remark~\ref{R:3.12}). We denote this class by $\mathcal{B}$.
\end{enumerate}

Let $N \cong \partial N \times I \underset{r}{\cup} B$ be a mapping cylinder neighborhood of $B \subset \mathbb{R}^m$. It is equivalent to the Spivak fibration by~\cite{Browd72}, and it gives rise to the following diagram (see Section~\ref{Poincare}) 


\begin{center}
\begin{tikzpicture}[node distance=2.5cm, auto]
  \node (Hq-1) {$C^\# (N, \Lambda)$};
  \node (Hn-q) [below of=Hq-1] {$C_{n-\#}  (N, \Lambda)$};
  \node (Hq-1W) [node distance=4cm, right of=Hq-1] {$S^\#_c (\widetilde{B})$};
  \node (Hq-1V) [node distance=4cm, right of=Hq-1W] {$D^\#$};
  \node (Hn-qW) [below of=Hq-1W] {$S_{n-\#} (\widetilde{B})$};
  \node (Hn-qV) [below of=Hq-1V] {$D_{n-\#}$};
  \draw[->, font=\small] (Hq-1W) to node [swap] {$r^\#$} (Hq-1);
  \draw[->, font=\small] (Hq-1W) to node {$\alpha^\#$} (Hq-1V);
  \draw[->, font=\small] (Hn-q) to node {$r_\#$} (Hn-qW);
  \draw[->, font=\small] (Hn-qV) to node [swap] {$\alpha_\#$} (Hn-qW);
  \draw[->, font=\small] (Hq-1) to node {$\bullet \cap [\Sigma]$} (Hn-q);
  \draw[->, font=\small] (Hq-1W) to node {$\bullet \cap [B]$} (Hn-qW);
  \draw[->, font=\small] (Hq-1V) to node {$\Phi_0$} (Hn-qV);
\end{tikzpicture}
\end{center} 

Since $r: N \rightarrow B$ is a simple equivalence, $\bullet \cap [\Sigma]$ is also a simple chain equivalence.
This proves Lemma~\ref{L:2} from Section~\ref{SS:Lambda}.

As explained in Section~\ref{Poincare}, generalized manifolds belong to this class, and of course, also finite simple $\Lambda$-PD
complexes.
To recognize generalized manifolds within the above defined class $\mathcal{B}$, we define $\varepsilon$-$\Lambda$-structures:
\begin{definition}
A (simple) $\Lambda$-PD structure $(D_\#, \Phi)$ on $B$ is a (simple) $\varepsilon$-$\Lambda$-PD
structure on $B$ if it makes $\bullet \cap [\Sigma]: C^\# (N) \rightarrow C_{n-\#} (N)$ a (simple) $\varepsilon$-chain equivalence by means of the diagram
\begin{center}
\begin{tikzpicture}[node distance=2.5cm, auto]
  \node (Hq-1) {$C^\# (N, \Lambda)$};
  \node (Hn-q) [below of=Hq-1] {$C_{n-\#}  (N, \Lambda)$};
  \node (Hq-1W) [node distance=4cm, right of=Hq-1] {$S^\#_c (\widetilde{B})$};
  \node (Hq-1V) [node distance=4cm, right of=Hq-1W] {$D^\#$};
  \node (Hn-qW) [below of=Hq-1W] {$S_{n-\#} (\widetilde{B})$};
  \node (Hn-qV) [below of=Hq-1V] {$D_{n-\#}$};
  \draw[->, font=\small] (Hq-1W) to node [swap] {$r^\#$} (Hq-1);
  \draw[->, font=\small] (Hq-1W) to node {$\alpha^\#$} (Hq-1V);
  \draw[->, font=\small] (Hn-q) to node {$r_\#$} (Hn-qW);
  \draw[->, font=\small] (Hn-qV) to node [swap] {$\alpha_\#$} (Hn-qW);
  \draw[->, font=\small] (Hq-1) to node {$\bullet \cap [\Sigma]$} (Hn-q);
  \draw[->, font=\small] (Hq-1W) to node {$\bullet \cap [B]$} (Hn-qW);
  \draw[->, font=\small] (Hq-1V) to node {$\Phi_0$} (Hn-qV);
\end{tikzpicture}
\end{center} 
\end{definition}

A result of Daverman and Husch (see Theorem~\ref{T:4.4} above)
leads to the following:

\begin{theorem}[Recognition Criterion]\label{RC}
Suppose that $B$ belongs to $\mathcal{B}$. 
Then $B$ is a generalized manifold if for every $\varepsilon > 0$, 
$B$ admits an $\varepsilon$-$\Lambda$-PD structure.
\end{theorem}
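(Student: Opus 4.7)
The plan is to geometrize the algebraic $\varepsilon$-$\Lambda$-PD data on $B$ into a controlled lifting property on the boundary $\partial N$ of a mapping cylinder neighborhood, upgrade that to an approximate fibration by letting $\varepsilon \to 0$, and then conclude via Daverman--Husch (Theorem~\ref{T:4.4}). First I would fix an embedding $B \subset \mathbb{R}^m$ for $m$ sufficiently large and choose a mapping cylinder neighborhood $N \cong \partial N \times I \cup_r B$, with retraction $r \colon \partial N \to B$. Since $N$ is a compact smooth manifold with boundary, $\partial N$ is a closed manifold and $r$ is automatically proper; furthermore $B$ is an ANR by condition (i) of the class $\mathcal{B}$. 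This is precisely the geometric setup required by Theorem~\ref{T:4.4}.

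Next, for each $\varepsilon > 0$ I invoke the hypothesis to obtain a simple $\varepsilon$-$\Lambda$-PD structure $(D_\#, \Phi)$ on $B$. By definition, this makes
\[
\bullet \cap [\Sigma] \colon C^\#(N, \Lambda) \longrightarrow C_{n-\#}(N, \Lambda)
\]
a simple $\varepsilon$-chain equivalence over $B$, measured through the maps $r^\#$ and $r_\#$ appearing in the defining square. This is the controlled $\Lambda$-Poincar\'{e} duality condition that the construction of~\cite[Proposition~4.5]{BFMW96} is designed to detect: their argument converts such controlled algebraic duality into a $\delta$-controlled homotopy lifting property for the restriction $r \colon \partial N \to B$, where $\delta = T \cdot \varepsilon$ for a constant $T > 0$ depending only on $B$ (and the choice of $N$, which is fixed). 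This application of~\cite[Proposition~4.5]{BFMW96} is the technical heart of the argument and the step I expect to require the most care, because it is where the algebraic duality data is translated into geometric controlled lifting; fortunately, it is exactly the content already cited in Section~\ref{SS:Controlled}.

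Since the hypothesis supplies an $\varepsilon$-$\Lambda$-PD structure for \emph{every} $\varepsilon > 0$, given any $\delta > 0$ I can choose $\varepsilon = \delta/T$ and apply the previous step to obtain $\delta$-controlled lifting of $r$. In other words, $r \colon \partial N \to B$ has the $\delta$-controlled homotopy lifting property for every $\delta > 0$. By~\cite[Theorem~2.2]{CorDuv77}, it suffices to verify this lifting for cells, which is the case our construction provides, so $r$ is an approximate fibration in the sense of the definition given in Section~\ref{SS:Controlled}.

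Finally, with $r \colon \partial N \to B$ an approximate fibration from a closed manifold to an ANR, Theorem~\ref{T:4.4} (Daverman--Husch~\cite{DavHus84}) immediately yields that $B$ is a generalized manifold, completing the proof. The only subtlety worth flagging explicitly is that the constant $T$ in the BFMW estimate depends on $B$ but \emph{not} on $\varepsilon$, which is precisely what allows the $\varepsilon \to 0$ passage to produce an approximate fibration; without this uniformity the Daverman--Husch hypothesis could not be verified.
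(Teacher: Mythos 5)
Your proposal is correct and follows essentially the same route as the paper's proof: the $\varepsilon$-$\Lambda$-PD structures make $B$ an $\varepsilon$-PD space for all $\varepsilon>0$, \cite[Proposition~4.5]{BFMW96} converts this into the $T\cdot\varepsilon$-controlled lifting property for $r\colon\partial N\to B$ with $T$ independent of $\varepsilon$, and letting $\varepsilon\to 0$ yields an approximate fibration so that Theorem~\ref{T:4.4} applies. Your added emphasis on the uniformity of $T$ and the reduction to lifting over cells is consistent with the discussion already given in Section~\ref{SS:Controlled}.
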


\begin{remark}
Clearly, $B$ 
in Theorem~\ref{RC} is  simple, if the $\varepsilon$-$\Lambda$-PD structures are simple.
\end{remark}
\begin{proof}
The hypothesis implies that $B$ is an $\varepsilon$-PD space for all $\varepsilon > 0$. By~\cite[Proposition~4.5]{BFMW96}, it follows that $\partial N \rightarrow B$ has the $T \cdot \varepsilon$-lifting property for all $\varepsilon > 0$, where $T$ 
is a factor depending on $B$. Hence $\partial N \rightarrow B$ is an approximate fibration, i.e. $B$ is a generalized manifold (see Theorem~\ref{T:4.4} above).
\end{proof}

\subsection{Generalized manifolds as limits in the Gromov-Hausdorff metric space}\label{SS:4.3}

Bryant-Ferry-Mio-Weinberger lemma \cite[Lemma~7.2]{BFMW07} also allows a
characterization of generalized manifolds in terms of the Gromov-Hausdorff metric, denoted here by $d_G$. It is defined on the set of isometry classes of compact metric spaces, and
 $d_G$ can be
  proved to be a complete metric (for details
see~\cite{Fer92}, or \cite[Section~1.V]{Fer94}).

We consider elements of the class $\mathcal{B}$ as being points
 of the Gromov-Hausdorff space and we prove the following result.

\begin{lemma}\label{patch}
Let $X$ be a generalized compact $n$-manifold and let $\delta > 0$ be given. Then there exists a 2-patch space $X'$ such that $d_G (X, X') < \delta$.
\end{lemma}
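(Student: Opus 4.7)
The plan is to invoke Lemma~\ref{L:3.8} with a sufficiently small parameter and translate the resulting $\varepsilon$-homotopy equivalence over $X$ into a Gromov-Hausdorff estimate by means of a correspondence argument. Given $\delta > 0$, I would first choose $\varepsilon > 0$ (to be pinned down at the very end) and apply Lemma~\ref{L:3.8} to produce a 2-patch space $X_\varepsilon = W \cup_{\mathcal{S}} V$ together with an $\varepsilon$-homotopy equivalence $p : X_\varepsilon \to X$ over $X$. The clause ``over $X$'' supplies a homotopy inverse $q : X \to X_\varepsilon$ for which the tracks of the homotopies $p \circ q \simeq \mathrm{id}_X$ and $p \circ q \circ p \simeq p$ have diameter less than $\varepsilon$, i.e.\ $d_X(p\circ q(x), x) < \varepsilon$ for every $x \in X$ and $d_X(p\circ q\circ p(y), p(y)) < \varepsilon$ for every $y \in X_\varepsilon$.

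Next, I would endow $X_\varepsilon$ with a metric $d_{X_\varepsilon}$ compatible with $p$ in the sense that
$$|d_{X_\varepsilon}(y_1, y_2) - d_X(p(y_1), p(y_2))| < \varepsilon \qquad \text{for all } y_1, y_2 \in X_\varepsilon.$$
Such a metric is available because the 2-patch space produced in \cite{BFMW07} arises as a controlled gluing of the manifolds $W, V$ over $X$, so that $X_\varepsilon$ sits in a small neighborhood of $X$ in the ambient $\mathbb{R}^m$; the restriction of the Euclidean metric (or any compatible perturbation of the pullback metric via $p$) satisfies the displayed estimate. With this choice, the relation
$$R := \{(x, q(x)) : x \in X\} \;\cup\; \{(p(y), y) : y \in X_\varepsilon\} \;\subset\; X \times X_\varepsilon$$
is a correspondence, and a routine triangle-inequality computation combining the two $\varepsilon$-closeness bounds from Lemma~\ref{L:3.8} with the near-isometry of $p$ shows that its distortion
$$\operatorname{dis}(R) = \sup\bigl\{|d_X(x_1, x_2) - d_{X_\varepsilon}(y_1, y_2)| : (x_i, y_i) \in R\bigr\}$$
is bounded by $C\varepsilon$ for an absolute constant $C$. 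Since $d_G(X, X_\varepsilon) \leq \tfrac{1}{2}\operatorname{dis}(R)$, choosing $\varepsilon < \delta / C$ forces $d_G(X, X_\varepsilon) < \delta$, and I set $X' := X_\varepsilon$.

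The main obstacle is the compatibility of the metric on $X_\varepsilon$ with the map $p$; one must avoid equipping $X_\varepsilon$ with an arbitrary metric that, while topologically fine, destroys the quantitative control supplied by Lemma~\ref{L:3.8}. This is precisely where the explicit construction of $X_\varepsilon$ in \cite{BFMW07} as a controlled gluing of manifolds over $X$ is essential, since it ensures that $p$ has uniformly small tracks and hence that the metrization step above can indeed be carried out with $\varepsilon$-control rather than merely asymptotically.
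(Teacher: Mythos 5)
Your overall strategy --- apply Lemma~\ref{L:3.8} and convert the $\varepsilon$-homotopy equivalence over $X$ into a Gromov--Hausdorff estimate --- matches the paper's, and the correspondence/distortion bookkeeping at the end is routine. The gap sits exactly where you locate ``the main obstacle'': the existence of a metric on $X_\varepsilon$ with $|d_{X_\varepsilon}(y_1,y_2)-d_X(p(y_1),p(y_2))|<\varepsilon$. You justify it by asserting that the 2-patch space of \cite[Lemma~7.2]{BFMW07} ``sits in a small neighborhood of $X$ in the ambient $\mathbb{R}^m$,'' but that is not part of Lemma~\ref{L:3.8}: the patch space is an abstract gluing $W\cup_{\mathcal S}V$, and an $\varepsilon$-homotopy equivalence \emph{over $X$} only controls tracks measured in $X$; it says nothing about where, or how thinly, $X'$ can be embedded near $X$, nor that points move a small Euclidean distance under $p$. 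The paper supplies precisely this missing ingredient by invoking \cite[Proposition~4.10]{BFMW96}: after shrinking $\varepsilon$ so the construction takes place inside a regular neighborhood of $X\subset\mathbb{R}^m$, one obtains an embedding $j:X'\hookrightarrow Z$ into a regular neighborhood $p:Z\to X$ together with a retraction $r:Z\to X'$ satisfying $\varrho(p(z),r(z))<2\varepsilon$; the triangle inequality then gives $X\subset Z\subset X'_{3\varepsilon}$ and $X'\subset Z\subset X_\varepsilon$ in $\mathbb{R}^m$, and the ambient Hausdorff distance bounds $d_G$. Without this (or an equivalent) embedding statement, your metrization step is unsupported.

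Your parenthetical fallback --- perturbing the pullback pseudometric $d_X(p(\cdot),p(\cdot))$ by a small multiple of any compatible metric --- does formally yield a metric satisfying your displayed inequality, but it proves the wrong thing: the same device remetrizes \emph{any} compact space admitting a continuous map onto an $\varepsilon$-dense subset of $X$ into an almost-isometric copy of $X$, so it empties the lemma of its geometric content and would not support the intended corollary that generalized manifolds are limits of (naturally metrized) 2-patch spaces in the Gromov--Hausdorff space. The substance of the lemma is that the 2-patch space, carrying the metric it inherits from its polyhedral embedding in $\mathbb{R}^m$, is Gromov--Hausdorff close to $X$; that is what \cite[Proposition~4.10]{BFMW96} delivers and what your argument is missing.
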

\begin{remark}
Note that $X'$ in Lemma~\ref{patch} is in general  not a generalized manifold.
\end{remark}

\begin{proof}
Let $N$ be a cylindrical (regular) neighborhood of an embedding $X \subset \mathbb{R}^m$, $m \geq 2n+1$, $m-n\geq 3$.
If $C \subset \mathbb{R}^m$ is compact, we denote by $\varrho(x, C)$ the distance of $x$ from $C$ in the metric of $\mathbb{R}^m$, and $C_\varepsilon = \{x \in \mathbb{R}^m | \varrho (x, C) < \varepsilon\}$.
\\
By Lemma~\ref{L:3.8} we can choose an $\varepsilon$-homotopy equivalence $f: X' \rightarrow X$.
We can choose $\varepsilon>0$ small enough to get $X_\varepsilon \subset N$. Inside $X_\varepsilon$ we can find a cylindrical (regular) neighborhood $Z \xrightarrow{p} X$. By Proposition~\cite[Proposition~4.10]{BFMW96}, there is an embedding $X' \xrightarrow{j} Z$ and a retraction $r: Z \rightarrow X'$ such that $\varrho(p(z), r(z)) < 2\varepsilon$ for $z \in Z$.
\\
For clarity we write down the maps in the diagram

\begin{center}
\begin{tikzpicture}[node distance=1.5cm, auto]
  \node (Z) {$Z$};
  \node (E) [node distance=1cm, right of=Z] {$=$};
  \node (ZZ) [node distance=1cm,right of=E] {$Z$};
  \node (C) [node distance=1cm,right of=ZZ] {$\subset$};
  \node (XE) [node distance=1cm,right of=C] {$X_\varepsilon$};
  \node (X') [below of=Z] {$X'$};
  \node (X) [below of=ZZ] {$X$};
  \draw[->, font=\small] (Z) to node [swap] {$r$} (X');
  \draw[->, font=\small] (Z) to node {$\hookuparrow j$} (X');
  \draw[->, font=\small] (ZZ) to node [swap] {$p$} (X);
  \draw[->, font=\small] (ZZ) to node {$\hookuparrow i$} (X);
  \draw[->, font=\small] (X') to node {$f$} (X);
\end{tikzpicture}
\end{center}

\noindent
For $z\in Z$ we therefore get
$$\varrho(z, r(z)) \leq \varrho(z, p(z)) +\varrho(p(z), r(z)) < \varrho(z, p(z)) + 2\varepsilon < 3\varepsilon,$$
since $Z \subset X_\varepsilon$.
Hence $\varrho(z, X') < 3\varepsilon,$ for all $z\in Z$, i.e. $X \subset Z \subset X'_{3\varepsilon}$. 
Since $X'\subset Z\subset X_\varepsilon$
we get by definition of $d_G$ that
$d_G(X',X)<3\varepsilon.$
This proves the lemma, since we can choose $\varepsilon$ arbitrarily small.
\end{proof}

\begin{caveat}
The notion $X_\varepsilon$ used in the above proof differs from the notion used in Section~\ref{SS:Simple}.
\end{caveat}
\begin{corollary}
Generalized manifolds are isolated limits of the subspace of 2-patch spaces in the Gromov-Hausdorff space.
\end{corollary}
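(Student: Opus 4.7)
The plan is to deduce the corollary directly from Lemma~\ref{patch} together with the remark that immediately follows it. Lemma~\ref{patch} asserts that every compact generalized $n$-manifold $X$ admits, for each $\delta>0$, a 2-patch approximation $X'$ with $d_G(X,X')<\delta$. Taking $\delta_i\to 0$ produces a sequence $X_i'\in\mathcal{C}_n$ with $X_i'\to X$ in the Gromov--Hausdorff metric, which shows that $X$ lies in the $d_G$-closure of $\mathcal{C}_n$.

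To make the word \emph{isolated} precise, I would invoke the remark following Lemma~\ref{patch}: the approximating 2-patch spaces $X_i'=W\cup_{\mathcal{S}}V$ produced by the construction of Lemma~\ref{L:3.8} are in general not themselves generalized manifolds, since the controlled homotopy equivalence $\mathcal{S}\colon\partial W\to\partial V$ used in the gluing is typically not a homeomorphism, and so the local structure of $X_i'$ along the seam $\partial W=\partial V$ fails to be locally Euclidean. Therefore each $d_G$-neighborhood of $X$ contains 2-patch spaces outside the class of generalized manifolds, so $X$ cannot be an interior point of the generalized-manifold subregion inside $\mathcal{C}_n$.

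Combining the two observations yields the corollary: every compact generalized $n$-manifold $X$ is a $d_G$-limit of elements of $\mathcal{C}_n$, but $X$ sits strictly on the frontier of $\mathcal{C}_n$ and not in its interior. This matches the Gromov--Hausdorff Limit Criterion stated in the introduction, in which every neighborhood of a generalized manifold contains non-generalized-manifold 2-patch spaces.

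The main obstacle I anticipate is ensuring that the approximating sequence can always be arranged so that no $X_i'$ accidentally happens to be a generalized manifold itself. Freedom in the choice of $W$, $V$, and $\mathcal{S}$ in the proof of Lemma~\ref{L:3.8} should allow one to exclude the degenerate case in which $\mathcal{S}$ becomes a homeomorphism, but a fully rigorous verification proceeds either through Quinn's resolution invariant -- which is a cobordism invariant and can be made nontrivial by varying the gluing -- or through a direct local homology calculation at a point of the seam.
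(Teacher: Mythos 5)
Your proposal is correct and follows the paper's own (implicit) argument exactly: the corollary is an immediate consequence of Lemma~\ref{patch} applied with $\delta_i \to 0$, together with the remark that the approximating 2-patch spaces $X'$ are in general not generalized manifolds. One small caution: do not justify this by saying $X'$ ``fails to be locally Euclidean'' along the seam, since generalized manifolds need not be locally Euclidean anywhere --- the relevant obstruction is the local homology at points of $\partial W = \partial V$, as you correctly note at the end.
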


\begin{remark}
Other examples of limits in the Gromov-Hausdorff space were considered in~\cite[Section 1.V]{Fer94}.
\end{remark}

\section*{Appendix}
The relations between $\mathbb{Z}$-PD and $\Lambda$-PD of a space $K$ were always behind the discussion in the previous sections.
Since it is of general interest and is not explicitly presented in the literature, we state them in the following lemma:

\begin{lemma}
Let $K$ be a finitely dominated CW
complex. If $K$ satisfies $\mathbb{Z}$-PD, then it satisfies $\Lambda$-PD, where $\Lambda = \mathbb{Z}[\pi_1(K)]$.\end{lemma}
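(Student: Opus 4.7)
The plan is to use the Spivak normal fibration of $K$, combined with Poincar\'e--Lefschetz duality on a regular neighborhood of a finite dominator of $K$ and the Thom isomorphism for spherical fibrations. The key observation is that both duality on a manifold with boundary and the Thom isomorphism for a spherical fibration hold with arbitrary local coefficients, so in particular with $\Lambda$-coefficients, from which $\Lambda$-Poincar\'e duality on $K$ will follow formally.

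First, by the theorems of Spivak and Wall, a finitely dominated CW complex $K$ satisfying $\mathbb{Z}$-PD of dimension $n$ admits a Spivak normal fibration: a spherical fibration $\nu \to K$ of fiber dimension $k-1$ (for $k$ sufficiently large) together with a map $\alpha : S^{n+k} \to T(\nu)$ realizing the fundamental class $[K] \in H_n(K;\mathbb{Z})$ under the Thom isomorphism. To geometrize this, I would pick a finite CW dominator $L$ of $K$ with maps $K \xrightarrow{d} L \xrightarrow{u} K$ such that $u \circ d \simeq 1_K$, embed $L \subset S^{n+k}$, and let $N$ be a regular neighborhood. Then $N$ is a compact topological manifold; the boundary map $\partial N \to L$ pulls back to (the total space of) $\nu$ via $u$, and $N/\partial N \simeq T(\nu)$. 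After arranging that $u_* : \pi_1(L) \to \pi_1(K)$ be surjective (e.g., by wedging $L$ with enough circles), $\Lambda$-local systems on $K$ pull back to $\Lambda$-local systems on $N$.

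Now I would combine the two duality isomorphisms. Poincar\'e--Lefschetz duality for the topological manifold $N$ with the $\Lambda$-local system pulled back from $K$ gives
$$\cap [N] : H^q(N, \partial N; \Lambda) \xrightarrow{\cong} H_{n+k-q}(N; \Lambda).$$
The Thom isomorphism for the spherical fibration $\nu$, which holds with arbitrary local coefficients since it is fiberwise, gives
$$\cap U : \widetilde{H}^q(T(\nu); \Lambda) \xrightarrow{\cong} H^{q-k}(K; \Lambda).$$
Combining these with the identifications $H^q(N, \partial N; \Lambda) = \widetilde{H}^q(T(\nu); \Lambda)$ and $H_{n+k-q}(N; \Lambda) \cong H_{n+k-q}(K; \Lambda)$, and setting $p = q - k$, one obtains
$$H^p(K; \Lambda) \;\cong\; H_{n-p}(K; \Lambda).$$
That this composite isomorphism is precisely cap product with the fundamental class $[K]$ follows from naturality of the cap product together with the defining identity $\alpha_*[S^{n+k}] \cap U = [K]$ of the Spivak map, applied on the universal cover.

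The main obstacle will be the transfer in Step 2: since $K$ is only finitely dominated and not a finite CW complex, one cannot simply embed $K$ itself as a nice subset of Euclidean space and take a regular neighborhood. One must either work with the finite dominator $L$ and then transfer the duality back to $K$ via $u$ and $d$ (using that $d$ is a homotopy right inverse to $u$, so $K$ appears as a homotopy retract of $L$ and its chain complex as a direct summand of that of $L$), or equivalently invoke the $S$-duality description of the Spivak fibration, which makes sense for any finitely dominated space and yields the combined duality on the level of stable homotopy types without requiring a genuine embedding of $K$.
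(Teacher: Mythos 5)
Your overall route is the same as the paper's: produce the Spivak normal fibration from Browder's theorem, upgrade its Thom isomorphism to $\Lambda$-coefficients, and compose with Lefschetz duality on a neighborhood pair $(N,\partial N)$. But one step of your geometrization is genuinely wrong as stated: the claim that ``the boundary map $\partial N \to L$ pulls back to $\nu$ via $u$'' for a regular neighborhood $N$ of a finite \emph{dominator} $L$. For an arbitrary finite complex $L$, the retraction $\partial N \to L$ is a spherical fibration (and $N/\partial N$ a Thom space over $L$) precisely when $L$ is itself a Poincar\'e complex, which a dominator need not be: for example $L=S^n\vee S^1$ dominates $S^n$, yet the point-preimages of $\partial N\to L$ are spheres of different dimensions over the two wedge summands, so $\partial N\to L$ is nothing like $u^*\nu_K$. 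Consequently the identification $H^q(N,\partial N;\Lambda)\cong \widetilde H^q(T\nu;\Lambda)$, which your composite needs, does not come from $L$. The correct fix is the one you mention only as an afterthought: either run the argument through a genuinely finite Poincar\'e model (e.g.\ $K\times S^1$), or use the $S$-duality description of the Spivak fibration of a finitely dominated complex; that alternative \emph{is} the proof, and the $L$-based version should be dropped. (The paper itself glosses over this by simply writing a regular neighborhood of $K\subset\mathbb{R}^{n+k}$.)

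The second point to tighten is the sentence ``the Thom isomorphism \dots holds with arbitrary local coefficients since it is fiberwise,'' because this is where all the content of the lemma sits: a priori the Thom class of the Spivak fibration only controls $\mathbb{Z}$-coefficient behavior. The statement is true, but for a spherical \emph{fibration} (not a bundle) it requires an argument --- the Serre spectral sequence of the pair $(M(\nu),E\nu)$ with coefficients in the local system pulled back from $K$, using that $U$ restricts to a generator on each fiber, or equivalently Browder's Lemma I.4.3 applied to the universal cover $\widetilde K$. The paper cites Ranicki, \emph{Proc.\ London Math.\ Soc.}\ (3) 40 (1980), Proposition 3.10, for exactly this. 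With that reference (or the spectral-sequence argument written out) and the $S$-duality fix above, your proof coincides with the paper's.
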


For the proof we note the following key facts:
\begin{itemize}
	\item By Browder's Theorem~\cite[Theorem~A]{Browd72}, $K$ has a Spivak normal fibration $\pi: E\nu_K \rightarrow K$.
	\item  A result of Ranicki \cite[Proposition~3.10]{1980}
	 shows that the cap
	 product with the Thom class $\left[\mathcal{U}\right]$, 
	 $$\cap: H_\ast (T\nu_K, \Lambda) \rightarrow H_{\ast-k} (K, \Lambda)$$ 
	 is an isomorphism, where $k$ is the fiber dimension of $E\nu_K$.
	The proof of this consists of applying Browder's lemma \cite[Lemma~I.4.3]{1972}   to the universal cover $\widetilde{K}$ of $K$.
	\item Using the homotopy equivalence $(N, \partial N) \rightarrow (E\nu_K, \partial E\nu_K)$, where $N \subset \mathbb{R}^{n+k}$ is a regular neighborhood of $K \subset \mathbb{R}^{n+k}$, one obtains  $\Lambda$-PD as the following composite map
	$$H^\ast (K, \Lambda) \cong H^\ast (N, \Lambda) \rightarrow H_{n+k-\ast} (N, \partial N, \Lambda) \xrightarrow{\cap\left[\mathcal{U}\right]}H_{n-\ast} (K, \Lambda)\qed$$
\end{itemize}

We emphasise that this lemma is not helpful for proving the results of this paper.
 
\section*{Acknowledgements}
This research was supported by the Slovenian Research Agency grants P1-0292, J1-8131, J1-7025, J1-6721, and N1-0064. 
We thank K. Zupanc for her technical assistance with the preparation of the manuscript. We also acknowledge several comments from the referee.


\begin{thebibliography}{999}

\bibitem{Browd72} W. Browder, Poincar\'{e} spaces, their normal fibrations and surgery. \textit{Invent. Math.} \textbf{17} (1972), 191--202. MR 0326743

\bibitem{1972} W. Browder, \textit{Surgery on Simply-Connected Manifolds}.
 Springer-Verlag, New York-Heidelberg (1972). MR0358813 

\bibitem{BrFeMi96} J.L. Bryant, S. Ferry and W. Mio, $UV^{k}$--mappings on homology manifolds. \textit{Algebr. Geom. Topol.} \textbf{13} (4) (2013), 2141--2170. MR 3073911

\bibitem{BFMW96} J.L. Bryant, S. Ferry, W. Mio and S. Weinberger, Topology of homology manifolds. \textit{Ann. of Math.} \textbf{143} (2) (1996), 435--467. MR 97b:57017

\bibitem{BFMW07} J.L. Bryant, S. Ferry, W. Mio and S. Weinberger, Desingularizing homology manifolds. \textit{Geom. Topol.} \textbf{11} (2007), 1289--1314. MR 2008h:57036

\bibitem{CarEil56} E. Cartan and S. Eilenberg, \textit{Homological Algebra}. Princeton Univ. Press, Princeton (1956). MR 0077480

\bibitem{CavHegRep03} A. Cavicchioli, F. Hegenbarth and D. Repov\v{s}, On the construction of $4k$--dimensional generalized manifolds. In \textit{Proc. ICTP, High--dimensional Manifold Topology} (F.T. Farrell and W. L\" uck, ed.), ICTP, Trieste, Italy (2003), pp. 103--124. MR 2005f:57033

\bibitem{CavHegRep16} A. Cavicchioli, F. Hegenbarth and D. Repov\v{s}, \textit{Higher-dimensional generalized manifolds: surgery and constructions}. EMS Series of Lectures in Mathematics. European Mathematical Society, Z\"{u}rich, 2016. MR 3558558

\bibitem{Chapman77} T.A. Chapman, Simple homotopy theory for ANR's. \textit{Geom. Top. Appl.} \textbf{7} (1977), 165--174. MR 0500913

\bibitem{Chapman80} T.A. Chapman, Invariance of torsion and the Borsuk conjecture. \textit{Canad. J. Math.}  \textbf{32} (6) (1980), 1333--1341. MR 82i:57019

\bibitem{Coh73} M.S. Cohen,  \textit{A Course in Simple Homotopy Theory}. Springer Verlag, Berlin (1973). MR 0362320

\bibitem{CorDuv77} D. Coram and P. Duvall, Approximate fibrations and a movability condition for maps. \textit{Pacific J. Math} \textbf{72} (1977), 41--56. MR 0467745

\bibitem{DavHus84} R.J. Daverman and L. Husch, Decompositions and approximate fibrations. \textit{Michigan Math. J. } \textbf{31} (2) (1984), 197--214. MR 86a:54009

\bibitem{Edw78} R.D. Edwards, The topology of manifolds and cell-like maps. In \textit{Proceedings of the International Congress of Mathematicians, Helsinki, 1978}, Acad. Sci. Fennica, Helsinki, 1980, pp. 111--127. MR 81g:57010

\bibitem{Fer92} S.C. Ferry, Notes on Polyhedral and TOP Topology. \textit{Unpubl. manuscr.}, SUNY Binghampton (1992--1993).

\bibitem{Fer94} S.C. Ferry, \textit{Controlled Topology and the Characterization of Manifolds}. NSF--CBMS Reg. Res. Conf., \textit{unpubl. manuscr.}, Univ. of Tennessee, Knoxville (1994).

\bibitem{FerPed95} S.C. Ferry and E.K. Pedersen, Epsilon surgery theory. In \textit{Proc. Oberwolfach Conference 1993, Vol II}, London Math. Soc., Lecture Notes Series 227, London 1995, pp. 167--226. MR 97g:57044



\bibitem{Lac77} R.C. Lacher, Cell--like mappings and their generalizations. \textit{Bull. Amer. Math. Soc.} \textbf{83} (4) (1977), 495--552. MR 58\#31095

\bibitem{MS1} S. Marde\v si\'c and J. Segal, $\varepsilon$-mappings and generalized manifolds, I.
\textit{Michigan Math. J.} {\textbf 14} (1967), 171-182. MR0211407

\bibitem{MS2} S. Marde\v si\'c and J. Segal, $\varepsilon$-mappings and generalized manifolds, II.
\textit{Michigan Math. J.} {\textbf 14} (1967), 423-426. MR0221511
  
\bibitem{Mil66} J.W. Milnor, Whitehead torsion. \textit{Bull. Amer. Math. Soc.} \textbf{72} (1966), 358--426. MR 33\#4922

\bibitem{Mit90} W.J.R. Mitchell, Defining the boundary of a homology manifold. \textit{Proc. Amer. Math. Soc.} \textbf{110} (2) (1990), 509--513. MR 90m:57015

\bibitem{Qui79} F.S. Quinn, Ends of maps I. \textit{Ann. of Math.} \textbf{110} (2) (1979), 275--331. MR 82k:57009

\bibitem{Qui83} F.S. Quinn, Resolutions of homology manifolds and the topological characterization of manifolds. \textit{Invent. Math.} \textbf{72} (2) (1983), 267--284. MR 85b:57023

\bibitem{Qui87} F.S. Quinn, An obstruction to the resolution of homology manifolds. \textit{Michigan Math. J.} \textbf{34} (2) (1987), 285--291. MR 88j:57016

\bibitem{1980} A.A. Ranicki, 
The algebraic theory of surgery II. Applications to topology.
\textit{Proc. London Math. Soc.} (3) \textbf{40} (1980), no. 2, 193--283. 82f:57024b

\bibitem{Ran92} A.A. Ranicki, \textit{Algebraic $L$--Theory and Topological Manifolds}. Cambridge Tracts in Math. 102, Cambridge Univ. Press, Cambridge, 1992. MR 94i:57051

\bibitem{Rep94} D. Repov\v{s}, The recognition problem for topological manifolds: A survey. \textit{Kodai Math. J.} \textbf{17} (1994), 538--548. MR 96d:57024

\bibitem{RhMaKe67} G. de Rham, S. Manmary, M. Kervaire, Torsion et type simple d' homotopie. Springer Lect. Notes \textbf{48}, Berlin (1967). MR 0222893 

\bibitem{Wall70} C.T.C. Wall, \textit{Surgery on Compact Manifolds}. London Math. Soc. Monogr. 1, Academic Press, New York 1970. MR 55\#4217

\bibitem{West77} J.E. West, Mapping Hilbert cube manifolds to ANR's: a solution of a conjecture of Borsuk. \textit{Ann. Math.} \textbf{106} (1) (1977), 1--18. MR 0451247

\bibitem{Yam87} M. Yamasaki, $L$--groups of crystallographic groups. \textit{Invent. Math.} \textbf{88} (3) (1987), 571--602. MR 88c:57017

\bibitem{Yam98} M. Yamasaki, Controlled surgery theory (Japanese). \textit{Sugaku} \textbf{50} (3) (1998), 282--292. English transl: \textit{Sugaku Expositions} \textbf{13} (1) (2000), 113--124. MR 1755658

\end{thebibliography}
\end{document}